\newcommand{\comments}[1]{}
\numberwithin{equation}{section}
\def\blfootnote{\xdef\@thefnmark{}\@footnotetext}
\definecolor{orange}{rgb}{1,0.5,0}
\theoremstyle{plain}
\newtheorem{theorem}{Theorem}[section]
\newtheorem{lemma}[theorem]{Lemma}
\theoremstyle{definition}
\newtheorem{ex}{Example}[section]
\theoremstyle{remark} 
\newtheorem{remark}{Remark}[section]
\begin{document}
\title[Regularity and growth conditions for fast escaping points]{Regularity and growth conditions for fast escaping points of entire functions}
\author{V. Evdoridou}
\address{Department of Mathematics and Statistics\\ The Open University\\ Walton Hall\\ Milton Keynes MK7 6AA\\ United Kingdom}
\email{vasiliki.evdoridou@open.ac.uk}
\date{\today}
\begin{abstract}
Let $f$ be a transcendental entire function. The quite fast escaping set, $Q(f)$, and the set $Q_2(f),$ which was defined recently in \cite{Evd16}, are equal to the fast escaping set, $A(f),$ under certain conditions. In this paper we generalise these sets by introducing a family of sets $Q_m(f)$, $m \in \mathbb{N}.$ We also give one regularity and one growth condition which imply that $Q_m(f)$ is equal to $A(f)$ and we show that all functions of finite order and positive lower order satisfy $Q_m(f)=A(f)$ for any $m$. Finally, we relate the new regularity condition to a sufficient condition for $Q_2(f)=A(f)$ introduced in \cite{Evd16}. 
\end{abstract}

\maketitle
\section{Introduction}

 Let $f$ be a transcendental entire function. The set of points $z \in \mathbb{C}$ for which $(f^n)_{n \in \mathbb{N}}$ forms a normal family in some neighbourhood of $z$ is called the \textit{Fatou set} $F(f)$ and the complement of $F(f)$ is the \textit{Julia set} $J(f)$. An introduction to the properties of these sets can be found in \cite{Berg}.
 
A lot of work has been done in recent years on a conjecture of Eremenko on the escaping set of $f$. The \textit{escaping set} $I(f)$ of $f$ is defined as follows: 
 $$I(f)= \{z \in \mathbb{C}:f^n(z) \to \infty\}$$ and it was first studied by Eremenko in \cite{Ere} who showed that for any transcendental entire function $f,$ we have $I(f) \cap J(f) \neq \emptyset$, 
 $J(f)= \partial I(f)$ and
 all the components of $\overline{I(f)}$ are unbounded.

 His conjecture, that all the components of $I(f)$ are unbounded, is still an open question. Significant progress has been made on the conjecture by Rippon and Stallard who proved that $I(f)$ has at least one unbounded component (see \cite[Theorem 1]{F-E}). In order to do this, they considered  a subset of the escaping set known as the \textit{fast escaping set}, $A(f)$. This set was introduced by Bergweiler and Hinkannen in \cite{B-H}. We will use the definition given by Rippon and Stallard in \cite{Fast} according to which
$$A(f)= \{z: \;\text{there exists}\;\ell \in \mathbb{N}\;\text{such that}\; \lvert f^{n+\ell}(z) \rvert \geq M^n(R,f),\;\text{for}\;n \in \mathbb{N}\},$$
where $$M(r,f)= M(r) = \max_{\lvert z\rvert =r} \lvert f(z)\rvert, \;\;\text{for}\;\;r>0,$$ and $R>0$ is large enough to ensure that $M(r) >r$ for $r \geq R.$ In the same paper they showed that $A(f)$ has  properties similar to the properties of $I(f)$ listed above. 
(Some of these results were shown in \cite{B-H}.)

The set $A(f)$ also has other nice properties (described in \cite{Fast}) and plays a key role in iteration of transcendental entire functions and so it is useful to be able to identify points that are fast escaping. In \cite[Theorem 2.7]{Fast}, it is shown that  points which eventually escape faster than the iterates of the function $\mu_{\varepsilon}$ defined by $\mu_{\varepsilon}(r)= \varepsilon M(r), \varepsilon \in (0,1), r>0$,  are actually fast escaping.

It is natural to ask whether this $\mu_{\varepsilon}$ can be replaced by a smaller function. In this context,
 Rippon and Stallard introduced the \textit{quite fast escaping set} $Q(f)$ in \cite{Regul} and in \cite{Evd16} we generalised this and introduced the following family of sets:
 
$$Q_m(f)= \{z: \exists \;\;\varepsilon \in (0,1), \ell \in \mathbb{N}\;\; \text{such that}\;\; \lvert f^{n+ \ell}(z) \rvert \geq \mu_{m,\varepsilon}^n(R),\; \text{for}\;\; n\in \mathbb{N}\},$$
where $\mu_{m,\varepsilon}$ is defined by
$$\log^m \mu_{m,\varepsilon}(r)= \varepsilon \log^m M(r),\;\;m \in \mathbb{N},\;\;\varepsilon \in (0,1).$$
In this paper, we only consider the case when there exists
 $R>0$ such that $\mu_{m,\varepsilon}(r)> r$ for $r \geq R$; in particular, we always have $Q_m(f)\subset I(f).$  For $m=1$ we obtain the quite fast escaping set $Q(f)$; that is, $Q_1(f)=Q(f).$

 Note that for $0<\varepsilon<1$ we have $\mu_{m,\varepsilon}(r)< \mu_{1,\varepsilon}(r)< M(r),$ for any $m\geq 2$ and for $r$ large enough, so
 $$A(f)\subset Q(f) \subset Q_m(f)\subset I(f).$$
In Section 2 we give a large class of functions for which  $\mu_{m,\varepsilon}(r)$ is greater than $r$ for $r$ large enough.

In \cite{Evd16} we considered the case $m=2$, that is, 
 \begin{equation}
 \label{defmu}
\mu_{2,\varepsilon}(r)= \exp((\log M(r))^{\varepsilon})
\end{equation}
 and we found regularity conditions which imply that $Q_2(f)=A(f)$. In particular, we proved that any transcendental entire function of finite order and positive lower order satisfies $Q_2(f)=A(f)$. 
 
 In this paper we introduce new techniques which enable us to generalise the result for any $m \in \mathbb{N}$ as given in the following theorem:
 
 \begin{theorem}
\label{ordqm}
Let $f$ be a transcendental entire function of finite order and positive lower order. Then $Q_m(f)=A(f), m \in \mathbb{N}.$
\end{theorem}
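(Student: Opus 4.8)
Since the reverse inclusion $A(f)\subseteq Q_m(f)$ is already recorded in the chain $A(f)\subseteq Q(f)\subseteq Q_m(f)\subseteq I(f)$ above, the plan is to prove only $Q_m(f)\subseteq A(f)$; the standing hypotheses guarantee that $\mu_{m,\varepsilon}(r)>r$ for large $r$ (the class singled out in Section~2), so the whole framework is available. Fix $z\in Q_m(f)$, so there are $\varepsilon\in(0,1)$ and $\ell\in\N$ with $|f^{n+\ell}(z)|\ge \mu_{m,\varepsilon}^n(R)$ for all $n\in\N$. Writing $\mu=\mu_{m,\varepsilon}$, I would reduce membership of $A(f)$ to a purely one-variable comparison of iterates: since $A(f)$ is independent of the basepoint, it suffices to produce $k\in\N$ and $R'\ge R$ with $\mu^{n+k}(R)\ge M^n(R')$ for all large $n$. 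Indeed $|f^{n+k+\ell}(z)|\ge \mu^{n+k}(R)\ge M^n(R')$ then transfers the fast-escaping lower bound (up to the harmless shift $\ell'=k+\ell$) and places $z$ in $A(f)$.

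The heart of the argument is this iterate comparison, which I would prove by induction on $n$ at the level of the $m$-fold logarithm. The defining relation $\log^m\mu(r)=\varepsilon\log^m M(r)$ means that each application of $\mu$ loses a factor $\varepsilon$ at the top logarithmic level, so a naive induction on $\mu^{n+k}(R)\ge M^n(R')$ fails. Instead I would carry a strengthened hypothesis with multiplicative slack, namely $\log^m\mu^{n+k}(R)\ge C\,\log^m M^n(R')$ for a fixed constant $C>1$ to be chosen, and reduce the inductive step to the following amplification estimate: for all sufficiently large $a\ge b$,
\[
\log^m a\ge C\log^m b \ \Longrightarrow\ \log^m M(a)\ge \tfrac{C}{\varepsilon}\,\log^m M(b).
\]
Writing $s_n=\mu^{n+k}(R)$ and $t_n=M^n(R')$, the relation $\log^m s_{n+1}=\varepsilon\log^m M(s_n)$ together with this estimate gives $\log^m s_{n+1}\ge C\log^m t_{n+1}$, closing the induction; the hypothesis with $C>1$ also yields $s_n\ge t_n$, which is exactly what is required. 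This is the point at which applying $M$ must more than compensate for the factor $\varepsilon$.

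To verify the amplification estimate I would use the order bounds: finite order $\rho$ and positive lower order $\lambda$ give $r^{\lambda-\delta}\le\log M(r)\le r^{\rho+\delta}$ for all large $r$ and any small $\delta>0$. Feeding the lower bound into $\log^m M(a)$ and the upper bound into $\log^m M(b)$, together with $\log^m a\ge C\log^m b$, reduces the implication to a comparison of powers governed by the ratio $(\rho+\delta)/(\lambda-\delta)$, which is finite precisely because $0<\lambda\le\rho<\infty$. Choosing $C$ larger than this ratio (then $\delta$ small) makes the amplification hold for all large arguments, while the base case is secured by taking $k$ so large that $\log^m\mu^{k}(R)\ge C\log^m R'$. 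I expect the main obstacle to be the clean treatment of the iterated logarithm for general $m$: one must track how constant factors are absorbed after each additional logarithm and ensure that every estimate holds uniformly in $n$ along the rapidly growing orbits. This uniform control of $\log^m$ across iterates, rather than the isolated algebra of the case $m=2$ treated in \cite{Evd16}, is the new technical ingredient, and it is also what forces both finiteness of the order and positivity of the lower order.
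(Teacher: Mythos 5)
Your proposal is correct, but it takes a genuinely different route from the paper. The paper also reduces everything to exactly the iterate comparison you formulate: producing $k$ and $R'$ with $\mu_{m,\varepsilon}^{n+k}(R)\ge M^n(R')$ is the $m$-weak regularity of Section 3, shown in Theorem~\ref{m-weak_necsuf} to be equivalent to $Q_m(f)=A(f)$. But the paper verifies that comparison by isolating a self-propagating functional inequality, $m$-log-regularity $\mu_{m,\varepsilon}(\exp^{m-1}(r^k))\ge\exp^{m-1}(M(r)^k)$, which iterates to $\mu_{m,\varepsilon}^n(\exp^{m-1}(r^k))\ge\exp^{m-1}(M^n(r)^k)$ with no further input (Theorem~\ref{mlogmweak}); the order bounds $e^{r^q}\le M(r)\le e^{r^p}$ enter only once, to check that finite order and positive lower order imply this condition (Theorem~\ref{folpomlog}). (A second proof, Theorem~\ref{ineqvreg}, runs an analogous scheme through $\phi_m(t)=\log^{m-1}M(\exp^{m-1}t)$ and a gauge function $\psi_m$.) You instead dispense with any named regularity condition and close the induction directly, carrying the slack as a constant $C>1$ at the level of $\log^m$ rather than as a power $k$ at the level of $\log^{m-1}$, and feeding the order bounds into every inductive step through your amplification estimate. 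That estimate is valid: for $m\ge2$ the hypothesis $\log^m a\ge C\log^m b$ gives $\log^{m-1}a\ge(\log^{m-1}b)^C$, while the order bounds give $\log^m M(a)\gtrsim\log^{m-1}a$ and $\log^m M(b)\lesssim\log^{m-1}b$ up to constants absorbed by the gain of the power $C>1$, so the factor $C/\varepsilon$ is recovered for any $C>1$; only for $m=1$ does the ratio of order to lower order genuinely constrain $C$, as you note. The trade-off is that the paper's route yields a reusable sufficient condition applicable beyond finite-order functions, while yours is shorter and self-contained for this theorem. In a full write-up you should make explicit that the threshold in the amplification estimate is uniform in $n$ (it depends only on $C$, $\varepsilon$, $m$ and the order bounds, and $t_n=M^n(R')$ is increasing), and that passing from the comparison to membership of $A(f)$ uses the basepoint-independence of $A(f)$ from \cite{Fast}, which you correctly invoke.
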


In particular, Theorem \ref{ordqm} implies that functions in the Eremenko-Lyubich class $\mathcal{B}$ which have finite order satisfy $Q_m(f)=~A(f).$ Indeed, functions in the class $\mathcal{B}$ have positive lower order and in fact have lower order not less than $1/2$  (see \cite[Lemma 3.5]{Dimen}). Note that the class $\mathcal{B}$ consists of transcendental
entire functions whose set of singular values (that is, critical values and asymptotic
values) is bounded, and it is much studied in complex dynamics. Classes of functions that satisfy the hypothesis of Theorem \ref{ordqm} were studied, for example, in \cite{Haus} and \cite{3R}.

We prove the theorem in two different ways. The first proof is based on a new regularity condition and the second on a growth condition.
In Section 3 we give our first proof of Theorem \ref{ordqm}  which is in  two steps. We first introduce a new regularity condition which we call  $m$-log-regularity and which implies that $Q_m(f)=A(f).$ Let $f$ be a transcendental entire function. Then $f$ is \textit{$m$-log-regular} if and only if, for any $\varepsilon \in (0,1),$ there exist $R>0$ and $k>1$ such that 
\begin{equation}
\label{mlog}
\mu_{m,\varepsilon}(\exp^{m-1}(r^k)) \geq \exp^{m-1}(M(r)^k),\;\;\text{for}\;\;r \geq R.
\end{equation}
For $m=1$ we obtain the log-regularity condition which was first introduced by Anderson and Hinkkanen in \cite{A-H} and was used by Rippon and Stallard in \cite{Regul} as a sufficient condition for $Q(f)$ to be equal to $A(f).$ We then show that any function which is $m$-log-regular satisfies $Q_m(f)=A(f).$ In the second step, we prove that all functions of finite order and positive lower order are $m$-log-regular.

In Section 4 we prove Theorem \ref{ordqm} in a different way, again in two steps. We give a growth condition which is sufficient for $Q_m(f)=A(f)$ and then we show that any transcendental entire function of finite order and positive lower order satisfies this growth condition.

In \cite{Evd16} we introduced a regularity condition called \textit{strong log-regularity} which implies that $Q_2(f)=A(f).$ In Section 5 we show how strong log-regularity is related to $2$-log-regularity. In particular we prove that a strongly log-regular function of finite order is always $2$-log regular and we give an example of a $2$-log-regular function of finite order which fails to be strongly log-regular.

\textit{Acknowledgments.} I would like to thank my supervisors Prof. Phil Rippon and Prof. Gwyneth Stallard for their help in the preparation of this paper.

\section{Properties of $Q_m(f)$}
In this section we prove some basic properties of $Q_m(f).$
Just as for $\mu_{2,\varepsilon}$, in the general case we do not know a priori that, for any given transcendental entire function, $\mu_{m,\varepsilon}(r)$ is greater than $r$ for $r$ large enough.  We show first that, for a large class of functions, there is always a positive $R$ such that $\mu_{m,\varepsilon}(r) > r$, for $r \geq R,$ and hence for these functions $Q_m(f)$ is defined.
\begin{theorem}
\label{mu>r}
Let $f$ be a transcendental entire function, $m \geq 2$ and $\varepsilon \in  (0,1)$. If there exist $q>0$, $r_0>0$ and $n \in \mathbb{N}$ such that 
\begin{equation}
\label{mur}
M(r) \geq \exp^{n+1}((\log^n r)^q),\;\;\text{for}\;\;r \geq r_0,
\end{equation}
 then, for any $c>1$, there exists $R>0$ such that
$$\mu_{m,\varepsilon}(r)>cr,\;\;\text{for}\;\;r \geq R.$$

\end{theorem}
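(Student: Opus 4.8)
The plan is to peel off the outermost iterated exponential defining $\mu_{m,\varepsilon}$ and reduce the claim to a comparison of iterated logarithms governed by \eqref{mur}. Since $M(r)\to\infty$, for large $r$ both $\mu_{m,\varepsilon}(r)$ and $cr$ exceed $\exp^{m-1}(1)$, so $\log^m$ is defined and increasing at each; as $\log^m\mu_{m,\varepsilon}(r)=\varepsilon\log^m M(r)$ by definition, the inequality $\mu_{m,\varepsilon}(r)>cr$ is then equivalent to
\begin{equation}
\label{reduc}
\varepsilon\log^m M(r)>\log^m(cr).
\end{equation}
Since $m\geq 2$ we have $\log^m(cr)-\log^m r\to 0$ as $r\to\infty$, so it is enough to establish \eqref{reduc} with $\log^m(cr)$ replaced by $\log^m r$, for large $r$ and with room to spare.

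First I would insert the growth hypothesis: by \eqref{mur} and monotonicity of $\log^m$, for $r\geq r_0$ we have $\log^m M(r)\geq\log^m\bigl(\exp^{n+1}((\log^n r)^q)\bigr)$, so the whole matter rests on evaluating the right-hand side, which I would do by cancelling the $m$ outer logarithms against the $n+1$ exponentials. This forces the two cases $m\leq n+1$ and $m\geq n+2$. In the first case, with $k:=n+1-m\geq 0$, the right-hand side equals $\exp^{k}((\log^n r)^q)$; for $k\geq 1$ this is an iterated exponential of a quantity tending to infinity, which dominates $\log^m r$ even after multiplication by $\varepsilon$, so \eqref{reduc} is immediate. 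The tight subcase is $k=0$ (that is, $m=n+1$, whence $n=m-1\geq 1$), where one must compare $\varepsilon(\log^n r)^q$ with $\log^{n+1}(cr)\sim\log(\log^n r)$; writing $t=\log^n r\to\infty$ this is $\varepsilon t^q$ against $\log t$, and the power wins because $q>0$.

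In the second case, with $j:=m-n-1\geq 1$, the right-hand side equals $\log^{j}((\log^n r)^q)$. Peeling off one logarithm gives $\log((\log^n r)^q)=q\log^{n+1}r$, and iterating the remaining $j-1$ logarithms I expect to obtain a positive constant multiple of $\log^{n+j}r=\log^{m-1}r$ (the constant is $q$ when $j=1$ and washes out to $1$ when $j\geq 2$). Then \eqref{reduc} becomes, up to lower-order terms, $\varepsilon\,c_0\log^{m-1}r>\log^m r=\log(\log^{m-1}r)$ for some $c_0>0$; with $s=\log^{m-1}r\to\infty$ this is $\varepsilon c_0 s$ against $\log s$, which holds for all large $s$.

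I expect the main obstacle to be the bookkeeping in the two tight subcases $m=n+1$ and $m\geq n+2$, where the cancellation leaves only a power of, or a constant multiple of, an iterated logarithm and the margin is thinnest; there one must track the constants $q$, $c$ and $\log q$ carefully to confirm that the factor $\varepsilon\in(0,1)$ does not destroy the dominance. By contrast the cases $m\leq n$ are essentially free, since a genuine iterated exponential survives the cancellation and overwhelms $\log^m r$ outright.
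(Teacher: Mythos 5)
Your proposal is correct and follows the same skeleton as the paper's proof: the same reduction of $\mu_{m,\varepsilon}(r)>cr$ to $\varepsilon\log^m M(r)>\log^m(cr)$ via the defining relation, followed by the same case split according to the sign of $n+1-m$. The genuine difference is in how the constants are absorbed. The paper proves a separate induction lemma (Lemma~\ref{indu}) comparing nested logarithms with arbitrary positive coefficients inserted at every level, and invokes it twice with different parameter choices to swallow $\varepsilon$, $q$ and $c$ simultaneously; you instead dispose of $c$ at the outset by noting that $\log^m(cr)-\log^m r\to 0$ for $m\ge 2$, and then peel logarithms one at a time using asymptotic equivalences. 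Your route is more elementary and self-contained (no auxiliary lemma), at the price of arguing each case by hand; the paper's lemma packages all the constant-tracking in one reusable statement. One spot to tighten: in the case $m\le n$ you justify dominance by saying that an iterated exponential of a quantity tending to infinity dominates $\log^m r$, but that principle is false in general (for instance $\exp(\log^4 r)=\log^3 r\ll\log r$, even though $\log^4 r\to\infty$); the correct reason is the matching of iteration depths, namely $\exp^{n+1-m}\bigl((\log^n r)^q\bigr)=\exp^{n-m}\bigl(\exp((\log^n r)^q)\bigr)$ while $\log^m r=\exp^{n-m}(\log^n r)$, and $\exp(u^q)\gg u$ as $u=\log^n r\to\infty$. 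The tight subcases $m=n+1$ and $m\ge n+2$, where the real content lies, you handle correctly.
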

Note that (\ref{mur}) is true for all functions of positive lower order as well as some functions of zero lower order. In particular, it is true for all the functions in class $\mathcal{B}$ as they have lower order not less than $1/2$.
In order to prove Theorem  \ref{mu>r} we use the following inequality.
\begin{lemma}
\label{indu}
For any $n \in \mathbb{N}, p\geq 1$ and $a_1,...,a_n, b_1,...,b_n >0$ there exists $R>0$ such that 
\begin{equation}
\label{lo2}
a_1\log (a_2\log...\log(a_n r)...) \geq \log (b_1 \log...\log((b_n r)^p)...),\;\;\text{for}\;\;r\geq R.
\end{equation}
\end{lemma}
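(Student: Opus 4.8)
The plan is to prove the inequality by induction on $n$, after first isolating the essential feature: the left-hand side is a nest of $n-1$ logarithms while the right-hand side is a nest of $n$ logarithms, so the two sides differ by exactly one application of $\log$. This single extra logarithm is what forces the inequality to hold for \emph{all} positive constants and all $p\ge 1$, since it makes the right-hand side of strictly smaller order than the left-hand side no matter how small $a_1$ or how large $p$ is.

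The technical engine is an auxiliary claim, proved by induction on the depth $d$: for any $d\ge 1$, any positive constants $c_1,\dots,c_d$ and any $p\ge 1$, there exist $\alpha,\beta,R>0$ with
$$\alpha \log^d r \;\le\; \log\!\big(c_1\log(c_2\cdots \log((c_d r)^p)\cdots)\big)\;\le\; \beta\log^d r,\qquad r\ge R,$$
where $\log^d$ denotes the $d$-fold iterate of $\log$. The base case $d=1$ is immediate from $\log((c_1r)^p)=p\log r+p\log c_1$. For the inductive step I would peel off the outermost logarithm, apply the hypothesis at depth $d-1$ to squeeze the inner nest between $\alpha'\log^{d-1}r$ and $\beta'\log^{d-1}r$, multiply by $c_1$, and take logs; because $\log^{d}r\to\infty$, the bounded additive constants $\log(c_1\alpha')$ and $\log(c_1\beta')$ produced this way are absorbed into fresh multiplicative constants.

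Granting this claim, the lemma follows quickly. The case $n=1$ reads $a_1 r\ge \log((b_1r)^p)=p\log(b_1r)$, which holds for large $r$ because a linear function outgrows a logarithm. For $n\ge 2$ I would write the left-hand side as $a_1\log(\mathrm L)$, where $\mathrm L=a_2\log(\cdots\log(a_nr))$, and observe that $\log(\mathrm L)$ is itself a nest of $n-1$ logarithms of the above form (with $p=1$), so the lower bound of the auxiliary claim gives $a_1\log(\mathrm L)\ge a_1\alpha\log^{n-1}r$. Similarly the right-hand side is a nest of $n$ logarithms, so the upper bound gives it $\le \beta\log^{n}r$. It then remains to see that $a_1\alpha\log^{n-1}r\ge \beta\log^{n}r$ for large $r$; writing $u=\log^{n-1}r\to\infty$ this is just $a_1\alpha\,u\ge\beta\log u$, which holds eventually.

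The main obstacle is conceptual rather than computational: one must resist comparing the two sides logarithm by logarithm (which fails as soon as $a_1<1$ or $p$ is large) and instead recognise that the leading factor $a_1$ and the innermost power $p$ are only lower-order perturbations against the gap of one iterated logarithm. Making this precise is exactly the content of the auxiliary claim, whose only delicate point is verifying at each stage of the depth induction that the constants and the power $p$ wash out into harmless multiplicative factors.
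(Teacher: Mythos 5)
Your argument is correct, and it reaches the lemma by a genuinely different route from the paper's. The paper inducts directly on the inequality itself: after first reducing to $p>1$ (because the inequality for $p>1$ implies it for $p=1$), the inductive step substitutes $\log(a_{n+1}r)$ for $r$ in the induction hypothesis and then compares the two innermost terms, which comes down to the estimate $(b_n\log(a_{n+1}r))^p\geq b_n\,p\log(b_{n+1}r)$ --- and this is exactly where $p>1$ is needed, since for $p=1$ that innermost comparison can fail when $a_{n+1}<b_{n+1}$. You instead prove a stronger two-sided auxiliary claim, that every nest of depth $d$ with positive constants and power $p\geq 1$ is squeezed between $\alpha\log^d r$ and $\beta\log^d r$, and then deduce the lemma from the trivial fact that $a_1\alpha u$ eventually dominates $\beta\log u$. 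Your route makes the mechanism transparent (the inequality holds solely because the two sides differ by one iterated logarithm, all constants and the power $p$ being absorbed into harmless multiplicative factors), needs no reduction to $p>1$, and yields reusable asymptotics for such nests; the paper's route is more economical, proving only the one-sided inequality it needs at the cost of the somewhat fiddly innermost-term comparison. Both are elementary inductions on the depth of nesting, and both are complete.
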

\begin{proof}
It suffices to prove (\ref{lo2}) for $p>1.$ We use proof by induction. As $a_1r \geq \log ((b_1r)^p)$, for $r$ large enough, (\ref{lo2}) is certainly true for $n= 1.$ 
Suppose now that (\ref{lo2}) is true for some $n \geq 2.$ We will deduce that
\begin{equation}
\label{lo4}
a_1 \log (a_2 \log... \log (a_n \log (a_{n+1}r))...)\geq \log(b_1 \log...\log(b_n\log((b_{n+1}r)^p))...),
\end{equation} 
for $r$ large enough. To do this, note first that, for $r$ large enough,
$$a_1 \log (a_2 \log... \log (a_n \log (a_{n+1}r))...)\geq \log(b_1 \log...\log((b_n\log (a_{n+1}r))^p)...),$$
by (\ref{lo2}). Then, in order to deduce (\ref{lo4}) it suffices to show that 
\begin{equation}
\label{lo5}
(b_n \log (a_{n+1}r))^p \geq b_n p \log (b_{n+1}r),
\end{equation}
for $r$ large enough.
Note now that (\ref{lo5}) is true since there exists $R (=R(n))>0$ such that
$$\frac{b_n^{p-1}}{p}(\log (a_{n+1}r))^p \geq \log (b_{n+1}r),\;\;\text{for}\;\;r\geq R,$$
and the result follows.\end{proof}

\begin{proof}[Proof of Theorem \ref{mu>r}]

By definition, $\mu_{m, \varepsilon}(r)= \exp^m( \varepsilon \log^mM(r)),$ so we have to show that
\begin{equation}
\label{mur1}
\exp^m( \varepsilon \log^mM(r))>cr,\;\;\text{for}\;\;r\;\;\text{large enough}.
\end{equation}
We consider three different cases depending on the relative sizes of $m$ and the positive integer $n$ from (\ref{mur}).

a) Suppose that $n+1=m.$ Then, by (\ref{mur}),
\begin{eqnarray}
 \exp^m( \varepsilon \log^mM(r)) &\geq& \exp^m( \varepsilon \log^m(\exp^{n+1}((\log^n r)^q)))\nonumber \\
 &=& \exp ^m (\varepsilon(\log^nr)^q)\nonumber \\
 & >&cr,\;\;\text{for}\;\;r\;\;\text{large enough},
 \end{eqnarray}
since $\varepsilon(\log^nr)^q > \log \log^n cr= \log^m cr,$ for $r$ large enough.

b) Suppose that $n+1<m.$ Then, by (\ref{mur}), 
$$ \exp^m( \varepsilon \log^mM(r)) \geq \exp^m( \varepsilon \log^m(\exp^{n+1}((\log^n r)^q)))= \exp ^m( \varepsilon \log^{m-n-1}((\log^nr)^q)).$$
Hence, we need to show that, for any $c>1$, 
$$\exp ^m( \varepsilon \log^{m-n-1}((\log^nr)^q))>cr,\;\;\text{for}\;\;r\;\;\text{large enough},$$
or, equivalently,
\begin{equation}
\label{mur2}
\varepsilon \log^{m-n-1}((\log^nr)^q)>\log^m cr,\;\;\text{for}\;\;r\;\;\text{large enough},
\end{equation}
which holds by applying  Lemma \ref{indu} with $n$ replaced by $m$, $p=1,$ $a_1= \varepsilon, a_{m-n-1}=q, b_{m}=c$ and all the other coefficients equal to $1$.

c) Finally, suppose that $n+1>m.$ Then, by (\ref{mur}),
$$ \exp^m( \varepsilon \log^mM(r)) \geq \exp^m( \varepsilon \log^m(\exp^{n+1}((\log^n r)^q)))= \exp ^m( \varepsilon\exp^{n+1-m}(( \log^nr)^q)).$$
Hence, we need to show that, for any $c>1$,
$$\exp ^m( \varepsilon \exp^{n+1-m}((\log^nr)^q))>cr,\;\;\text{for}\;\;r\;\;\text{large enough},$$
or, equivalently,
\begin{equation}
\label{mur3}
\varepsilon \exp^{n+1-m}((\log^nr)^q)>\log^m cr,\;\;\text{for}\;\;r\;\;\text{large enough}.
\end{equation}
Note now that (\ref{mur3}) is equivalent to 
\begin{equation}
\label{mur4}
(\log^nr)^q>\log^{n+1-m}\left(\frac{1}{\varepsilon}\log^m cr \right),\;\;\text{for}\;\;r\;\;\text{large enough}.
\end{equation}
If we apply Lemma \ref{indu} with $n$ replaced by $n+2$, $p=1$, $a_1=q$, $b_{n+2-m}=1/\varepsilon, b_{n+2}=c$ and the rest of the coefficients equal to $1$ we obtain
$$q\log^{n+1}r>\log^{n+2-m}\left(\frac{1}{\varepsilon}\log^m cr \right),$$
for $r$ large enough and so (\ref{mur4}) follows.\end{proof}

We now show that $Q_m(f)$ has some basic properties similar to those  of $I(f), A(f)$ and $Q(f).$

\begin{theorem}
Let $f$ be a transcendental entire function and $m \in \mathbb{N}$. Then
$$Q_m(f) \neq \emptyset,\;\;Q_m(f) \cap J(f) \neq \emptyset,\;\;\text{and}\;\;J(f)= \overline{Q_m(f) \cap J(f)}.$$ If, in addition, for any $c>1$, there exists $R>0$ such that 
\begin{equation}
\label{mucr}
\mu_{m, \varepsilon}(r) > cr,\;\;\text{for}\;\;r \geq R,
\end{equation}
 then $$\;J(f)= \partial Q_m(f),$$
and $\overline{Q_m(f)}$ has no bounded components.
\end{theorem}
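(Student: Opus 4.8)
The plan is to obtain the first three assertions directly from the inclusions $A(f)\subseteq Q_m(f)\subseteq I(f)$ recorded in the Introduction, together with the classical facts for $A(f)$ (see \cite{B-H}, \cite{Fast}), namely $A(f)\neq\emptyset$, $A(f)\cap J(f)\neq\emptyset$ and $J(f)=\overline{A(f)\cap J(f)}$. Since $A(f)\subseteq Q_m(f)$, we immediately get $Q_m(f)\neq\emptyset$ and $Q_m(f)\cap J(f)\supseteq A(f)\cap J(f)\neq\emptyset$. For the third property, the inclusions $A(f)\cap J(f)\subseteq Q_m(f)\cap J(f)\subseteq J(f)$ give, on taking closures and using that $J(f)$ is closed,
\[
J(f)=\overline{A(f)\cap J(f)}\subseteq \overline{Q_m(f)\cap J(f)}\subseteq \overline{J(f)}=J(f),
\]
so that $J(f)=\overline{Q_m(f)\cap J(f)}$.

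For the remaining statements I assume (\ref{mucr}). First I would prove $J(f)\subseteq\partial Q_m(f)$. The third property already gives $J(f)\subseteq\overline{Q_m(f)}$. On the other hand, the repelling periodic points of $f$ are dense in $J(f)$ and have bounded orbits, so they lie outside $I(f)$ and hence, since $Q_m(f)\subseteq I(f)$, outside $Q_m(f)$; thus $J(f)\subseteq\overline{\C\setminus Q_m(f)}$. Combining these, $J(f)\subseteq\overline{Q_m(f)}\cap\overline{\C\setminus Q_m(f)}=\partial Q_m(f)$.

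The reverse inclusion $\partial Q_m(f)\subseteq J(f)$ is the crux, and is equivalent to showing that no point of $F(f)$ lies in $\partial Q_m(f)$. For this it suffices to establish the all-or-nothing property: if $U$ is a Fatou component, then $Q_m(f)\cap U$ is either $\emptyset$ or all of $U$ (each case placing a Fatou point in $\interior Q_m(f)$ or in $\interior(\C\setminus Q_m(f))$). So suppose $z_0\in U\cap Q_m(f)$ and let $w\in U$; fix a compact connected $\Gamma\subseteq U$ containing $z_0$ and $w$. Since $z_0\in I(f)$ and $\{f^n\}$ is normal on $U$, the full sequence $f^n$ tends to $\infty$ locally uniformly on $U$, so for large $n$ the function $\log|f^n|$ is positive and harmonic near $\Gamma$, and Harnack's inequality yields a constant $C=C(\Gamma,U)>1$, independent of $n$, with $\log|f^n(w)|\geq\tfrac1C\log|f^n(z_0)|$. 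As $z_0\in Q_m(f)$ we have $\log|f^{n+\ell}(z_0)|\geq\log\mu_{m,\varepsilon}^n(R)$ for some $\varepsilon,\ell,R$, whence $\log|f^{n+\ell}(w)|\geq\tfrac1C\log\mu_{m,\varepsilon}^n(R)$ for large $n$. The main obstacle is to absorb the factor $1/C$: using (\ref{mucr}) (the super-linear growth of $\mu_{m,\varepsilon}$), I would show by an inductive comparison of the iterates of $\mu_{m,\varepsilon}$, in the spirit of Lemma \ref{indu}, that for a suitably smaller $\varepsilon'\in(0,\varepsilon)$ and suitable $\ell',R'$ one has $\tfrac1C\log\mu_{m,\varepsilon}^n(R)\geq\log\mu_{m,\varepsilon'}^{\,n}(R')$ for all large $n$; this gives $w\in Q_m(f)$ and hence $U\subseteq Q_m(f)$.

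Finally, for the claim that $\overline{Q_m(f)}$ has no bounded components, I would argue as Rippon and Stallard do for $A(f)$ and $Q(f)$ in \cite{Fast} and \cite{Regul}. For fixed $\varepsilon,R$ I would consider the closed level set $E=\{z:|f^n(z)|\geq\mu_{m,\varepsilon}^n(R)\text{ for all }n\geq0\}$ and show, by a maximum-modulus argument in which the super-linear growth (\ref{mucr}) of $\mu_{m,\varepsilon}$ plays the role that $M(r)/r\to\infty$ plays for the analogous level sets of $A(f)$, that every component of $E$ is unbounded. Writing $Q_m(f)$ in terms of such level sets and their preimages under iterates of $f$, and passing to the closure as in \cite{Fast}, one then obtains that $\overline{Q_m(f)}$ has no bounded components. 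I expect the Fatou-component transfer of the previous paragraph, and specifically the constant-absorption step, to be the principal difficulty; the level-set argument is essentially a transcription of the known one with $M$ replaced by $\mu_{m,\varepsilon}$.
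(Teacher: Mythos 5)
Your treatment of the first three properties, of the inclusion $J(f)\subseteq\partial Q_m(f)$, and of $\partial Q_m(f)\subseteq J(f)$ is correct and close in spirit to the paper's. The paper handles the Fatou-component step by citing a distortion lemma from \cite{Berg} giving $\lvert f^n(z')\rvert\geq C\lvert f^n(z)\rvert$ on a disc, so the constant is absorbed in one line from (\ref{mucr}) via $C\mu_{m,\varepsilon}^{n}(R)\geq\mu_{m,\varepsilon}^{n-1}(R)$; it then disposes of multiply connected components separately using $U\subseteq A(f)$. Your Harnack route puts the constant on $\log\lvert f^n\rvert$ instead, so you must absorb it by shrinking $\varepsilon$; your sketch of that step is workable (for $m\geq 2$ one has $\tfrac1C\exp^{m-1}(\varepsilon t)\geq\exp^{m-1}(\varepsilon' t)$ for $t$ large and any $\varepsilon'<\varepsilon$, and for $m=1$ one takes $\varepsilon'=\varepsilon/C$), and it has the mild advantage of treating simply and multiply connected components uniformly. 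Up to this point the proposal is sound.

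The genuine gap is in the final claim that $\overline{Q_m(f)}$ has no bounded components. Your plan to transcribe the level-set/maximum-modulus argument of \cite{Fast} with $M^n(R)$ replaced by $\mu_{m,\varepsilon}^n(R)$ is not a routine substitution. The unboundedness of components of $A_R(f)$ rests on the precise interplay between $f$ and its own maximum modulus: the nesting of the sets $\{z:\lvert f^j(z)\rvert\geq M^j(R),\ 0\leq j\leq n\}$ into unbounded continua uses that $M^{n+1}(R)$ is exactly the maximum of $\lvert f\rvert$ on the circle of radius $M^n(R)$, and no analogous structure theorem is known for the level sets defined with the much smaller function $\mu_{m,\varepsilon}$; indeed Rippon and Stallard themselves avoid this route for $Q(f)$ in \cite{Regul}. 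There is also the secondary issue of passing from level sets to the countable union over $\varepsilon$ and $\ell$ and then to the closure. The argument the paper uses (following \cite[Theorem~2.1]{Regul}) is both shorter and already within reach of what you have proved: if $\overline{Q_m(f)}$ had a bounded component $E$, surround it by an open topological annulus $A$ in the complement of $\overline{Q_m(f)}$; complete invariance of $\overline{Q_m(f)}$ and Montel's theorem place $A$ in $F(f)$; since $J(f)=\partial Q_m(f)$, the Fatou component containing $A$ must be multiply connected, hence contained in $A(f)\subseteq Q_m(f)$ by \cite[Theorem~2]{F-E}, contradicting $A\cap\overline{Q_m(f)}=\emptyset$. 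You should replace your level-set plan by this argument (or else supply the missing covering/nesting lemmas for $\mu_{m,\varepsilon}$, which would be a substantial piece of work in its own right).
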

\begin{proof}
All the properties above hold for $A(f)$ (see \cite{Fast}). As $A(f) \subset Q_m(f)$, we certainly have $Q_m(f) \neq \emptyset$ and $Q_m(f) \cap J(f) \neq \emptyset.$ Also $J(f)= \overline{A(f) \cap J(f)} \subset \overline{Q_m(f) \cap J(f)}.$ Since $J(f)$ is closed, we also have $\overline{Q_m(f) \cap J(f)} \subset J(f)$ and so the third property is also true.

In order to prove the two remaining properties, we follow the arguments in the proof of \cite[Theorem 2.1]{Regul}.
Note first that $Q_m(f)$ is infinite and completely invariant under $f$ which, since $J(f)$ is the smallest closed completely invariant set with at least three points, implies that $J(f) \subset \overline{Q_m(f)}$. But any open subset of $Q_m(f)$ is contained in $F(f)$ since it contains no periodic points of $f$, and so $J(f) \subset \partial Q_m(f).$ 

Suppose now that $\partial Q_m(f) \cap U \neq \emptyset,$ where $U$ is a  Fatou component. Then $Q_m(f) \cap U \neq \emptyset,$ and we take $z \in Q_m(f) \cap U$. Then there will be a disc $\Delta$ such that $z \in \Delta$ and $\overline{\Delta} \subset U$. If $U$ is simply connected then, by applying \cite[Lemma 7]{Berg}, we have that there exists $C>0$ such that
$$
\lvert f^n(z')\rvert \geq C\lvert f^n(z)\rvert,$$
for any $z' \in \Delta$ and $n \in \mathbb{N}$.
Hence, by (\ref{mucr}), there exists $R>0$ such that $\mu_{m,\varepsilon}(r) >r$, $r\geq R$ and $\ell \in \mathbb{N}$ such that, for $n \in \mathbb{N},$
$$\lvert f^{n+\ell}(z')\rvert \geq C\lvert f^{n+\ell}(z)\rvert \geq C\mu_{m,\varepsilon}^n(R),$$
and so
$$\lvert f^{n+\ell}(z')\rvert \geq \mu_{m,\varepsilon}^{n-1}(R),$$ by (\ref{mucr}).
Therefore, any point in the neighbourhood $\Delta$ of $z$ lies in $Q_m(f)$ which gives a contradiction. If the Fatou component $U$ is multiply connected then $U \subset A(f) \subset Q_m(f)$ (see \cite[Theorem 2]{F-E}) and so there is again a contradiction. Hence, $\partial Q_m(f)\subset J(f).$ 

Finally, if $\overline{Q_m(f)}$ has a bounded component, $E$ say, then there is an open topological annulus $A$ lying in the complement of $\overline{Q_m(f)}$ that surrounds $E$. Since $\overline{Q_m(f)}$ is completely invariant under $f$, $A$ is contained in $F(f)$ by Montel's theorem. But from the previous property, $J(f)= \partial Q_m(f)$ and so $a$ is contained in a multiply connected Fatou component. As any multiply connected Fatou component is contained in $A(f) \subset Q_m(f)$  we deduce that $A \subset Q_m(f)$ which gives a contradiction.
\end{proof}

\section{Regularity conditions for $Q_m(f)=A(f)$}
In this section, we use regularity conditions to prove Theorem \ref{ordqm}. In the introduction we defined $m$-log-regularity which is a sufficient condition for $Q_m(f)$ to be equal to $A(f).$ In fact, there also exists another regularity condition called $m$-weak-regularity which is equivalent to $Q_m(f)=A(f).$ We will show later that $m$-log-regularity is stronger than $m$-weak-regularity and hence if $f$ is $m$-log-regular then $Q_m(f)= A(f).$ Finally, we will use these ideas in order to prove Theorem \ref{ordqm}. Note that $m$-log-regularity is easier to check than $m$-weak-regularity which is defined as follows:

Let $R>0$ be any value such that $M(r)>r$ for $r \geq R$. We say that $f$ is $m$\textit{-weakly regular} if for any $\varepsilon \in (0,1)$  there exists $r=r(R)>0$ such that 
$$\mu_{m,\varepsilon}^n(r) \geq M^n(R), \;\;\text{for}\;\;n \in \mathbb{N},$$
or, equivalently, if there exists $\ell = \ell(R) \in \mathbb{N}$ such that 
$$\mu_{m,\varepsilon}^{n+\ell}(R) \geq M^n(R), \;\;\text{for}\;\;n \in \mathbb{N}.$$
For $m=1$ we have the weak-regularity that was introduced by Rippon and Stallard in \cite{Regul}.\\

We will show that $m$-weak-regularity is a necessary and sufficient condition for $f$ to satisfy $Q_m(f)=A(f).$ In order to prove our result we make use of the following theorem of Rippon and Stallard (see \cite[Theorem 3.1]{Regul}).
\begin{theorem}
\label{RSth}
Let $f$ be a transcendental entire function. There exists $R= R(f)>0$ with the property that whenever $(a_n)$ is a positive sequence such that 
\begin{equation}
\label{sq1.7}
a_n \geq R \;\;\text{and}\;\;a_{n+1} \leq M(a_n),\;\;\text{for}\;\; n\in \mathbb{N},
\end{equation}
there exists a point $\zeta \in J(f)$ and a sequence $(n_j)$ with $n_j \to \infty$ such that 
\begin{equation}
\label{sq1.8}
\lvert f^n(\zeta) \rvert \geq a_n, \;\;\text{for}\;\;n \in \mathbb{N},\;\;\text{but}\;\;\lvert f^{n_j}(\zeta) \rvert \leq M^2(a_{n_j}),\;\;\text{for}\;\;j \in \mathbb{N}.
\end{equation}
\end{theorem}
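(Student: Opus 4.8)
Since Theorem~\ref{RSth} is quoted from \cite{Regul}, the task is to reconstruct a proof of it. The plan is to combine a covering (winding) lemma for transcendental entire functions with a nested pull-back construction that tracks the prescribed sequence $(a_n)$, and then to refine that construction so as to force the controlled returns recorded in the second part of \eqref{sq1.8}. The constant $R=R(f)$ in the statement will be the constant furnished by the covering lemma (enlarged if necessary so that also $M(r)>r$ for $r\ge R$), so that it depends only on $f$ and not on the sequence.

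The key ingredient I would use is the standard covering lemma of the type established in \cite{Fast}: there is $R>0$, depending only on $f$, such that for every $r\ge R$ the map $f$ sends the annulus $\{z:r\le |z|\le M(r)\}$ over the annulus $\{w:r\le |w|\le M(r)\}$, that is,
\[
f\bigl(\{z:\ r\le |z|\le M(r)\}\bigr)\supseteq\{w:\ r\le |w|\le M(r)\},
\]
and moreover the image of $\{|z|=r\}$ winds many times around $0$, so that the $f$-preimage of any continuum \emph{crossing} this image annulus contains a component crossing $\{r\le |z|\le M(r)\}$. This is exactly the tool that converts the hypothesis $a_{n+1}\le M(a_n)$ from \eqref{sq1.7} into the ability to pull the orbit back one level at a time: because $a_{n+1}\le M(a_n)$, the data required at level $n+1$ reaches down into the region covered by $f$ acting on the level-$n$ annulus, so a crossing continuum at level $n+1$ can be pulled back to a crossing continuum at level $n$.

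First I would establish the lower bound, namely the first inequality in \eqref{sq1.8}. Fixing $N$, I start from a continuum $E_N$ crossing the annulus at level $N$ and meeting $J(f)$, and pull it back $N$ times by the covering lemma, obtaining at each stage a component $E_n$ crossing the level-$n$ annulus; since $J(f)$ is completely invariant, every $E_n$ again meets $J(f)$. On $E_0$ one then has $\lvert f^n(z)\rvert\ge a_n$ for $0\le n\le N$. Arranging these continua to be nested (by pulling back one fixed infinite tail of the construction and using compactness of the subcontinua of a fixed annulus) I obtain a decreasing sequence of nonempty compacta, and any $\zeta\in\bigcap_N E_0$ satisfies $\lvert f^n(\zeta)\rvert\ge a_n$ for all $n$; it lies in $J(f)$ because $J(f)$ is closed and each $E_0$ meets it.

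The hard part, and the genuine obstacle, will be the upper bounds $\lvert f^{n_j}(\zeta)\rvert\le M^2(a_{n_j})$ along an \emph{infinite} subsequence, secured simultaneously with all the lower bounds. The idea is to exploit the slack in \eqref{sq1.7}: each crossing continuum reaches \emph{down} to the inner boundary $\{\lvert w\rvert=a_n\}$ of its annulus, and because the image circle winds many times the pull-back at each level offers several components at different moduli, including ones of small modulus. At a preselected sequence of return levels $n_j\to\infty$ one routes the orbit through the inner boundary by choosing the low-modulus preimage component, while at the intervening levels one makes the ``ascending'' choices needed to keep $\lvert f^n\rvert\ge a_n$ when $a_n$ grows quickly. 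The main work is a bookkeeping argument showing that these descending choices can be inserted infinitely often and remain compatible with all later lower bounds; the factor $M^2$ (rather than $M$) absorbs the distortion inherent in the covering lemma, which controls the guaranteed modulus only up to two applications of $M$. It is precisely this interleaving of ascending and descending steps, and hence the quantitative form of the covering lemma rather than its mere existence, that is the crux of the proof.
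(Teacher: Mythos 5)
First, note that the paper does not prove this statement at all: it is imported verbatim as \cite[Theorem 3.1]{Regul}, so there is no internal proof to measure your reconstruction against. Judged on its own terms, your proposal has a genuine gap at exactly the point you yourself flag as ``the hard part''. The first half of \eqref{sq1.8} (the lower bounds $\lvert f^n(\zeta)\rvert\ge a_n$) is plausibly within reach of the nested pull-back scheme you describe, but the second half --- the returns $\lvert f^{n_j}(\zeta)\rvert\le M^2(a_{n_j})$ along an infinite subsequence, secured simultaneously with all the lower bounds --- is not proved; it is replaced by a declared intention (``a bookkeeping argument showing that these descending choices can be inserted infinitely often''). Since you identify this interleaving as the crux, what you have written is a plan for a proof rather than a proof.

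Second, the covering lemma you take as your key ingredient is not correctly stated and conceals the real input. The inclusion $f\bigl(\{z:r\le |z|\le M(r)\}\bigr)\supseteq\{w:r\le |w|\le M(r)\}$ of an annulus over itself does not by itself link level $n$ to level $n+1$, and, more importantly, the claim that the image ``reaches down'' to the inner boundary, or that preimage components of small modulus are available, is not a consequence of winding alone: it requires knowing that the minimum modulus $m(\rho)=\min_{\lvert z\rvert=\rho}\lvert f(z)\rvert$ is small for some radius $\rho$ in the relevant range. That transcendental entire functions admit such radii within intervals of the form $[r,M^2(r)]$ is a genuine theorem (of the kind established in \cite{Fast} and exploited in \cite{Regul}), and it is precisely this minimum-modulus estimate --- not ``distortion in the covering lemma'' --- that accounts for the factor $M^2$ in \eqref{sq1.8} and for the fact that the upper bound can only be guaranteed along a subsequence $(n_j)$ rather than for all $n$. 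Without stating and using such a result, neither the descending steps nor the exponent $2$ can be justified, so the reconstruction as it stands does not establish the theorem.
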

We now prove our result.
\begin{theorem}
\label{m-weak_necsuf}
Let $f$ be a transcendental entire function. Then $f$ is $m$-weakly regular if and only if $Q_m(f)=A(f).$
\end{theorem}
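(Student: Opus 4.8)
Since the chain $A(f)\subseteq Q(f)\subseteq Q_m(f)$ is already established, the statement reduces to the single inclusion $Q_m(f)\subseteq A(f)$: I must show that $m$-weak regularity forces it and that its failure destroys it. The direction ($\Leftarrow$) is the routine one. Fix $z\in Q_m(f)$, so there are $\varepsilon\in(0,1)$ and $\ell\in\mathbb{N}$ with $\lvert f^{n+\ell}(z)\rvert\ge\mu_{m,\varepsilon}^n(R)$ for all $n$. Applying $m$-weak regularity to this particular $\varepsilon$ yields $\ell'\in\mathbb{N}$ with $\mu_{m,\varepsilon}^{n+\ell'}(R)\ge M^n(R)$ for all $n$. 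Replacing $n$ by $n+\ell'$ in the first inequality and then inserting the second gives $\lvert f^{n+\ell+\ell'}(z)\rvert\ge\mu_{m,\varepsilon}^{n+\ell'}(R)\ge M^n(R)$, so $z\in A(f)$ with shift $\ell+\ell'$, and hence $Q_m(f)=A(f)$.

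For the direction ($\Rightarrow$) I would argue by contraposition, producing a point of $Q_m(f)\setminus A(f)$ from the failure of $m$-weak regularity, with Theorem \ref{RSth} as the only tool. Fix an $\varepsilon\in(0,1)$ witnessing the failure, so that for every $\ell$ there is an $n$ with $\mu_{m,\varepsilon}^{n+\ell}(R)<M^n(R)$, and enlarge $R$ so that $R=R(f)$ of Theorem \ref{RSth} and both $r<\mu_{m,\varepsilon}(r)<M(r)$ hold for $r\ge R$. The plan is to feed the sequence $a_n=\mu_{m,\varepsilon}^n(R)$ into Theorem \ref{RSth}: it satisfies (\ref{sq1.7}) because it increases from $R$ and $a_{n+1}=\mu_{m,\varepsilon}(a_n)<M(a_n)$. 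This produces $\zeta\in J(f)$ and a sequence $n_j\to\infty$ with $\lvert f^n(\zeta)\rvert\ge\mu_{m,\varepsilon}^n(R)$ for all $n$ and $\lvert f^{n_j}(\zeta)\rvert\le M^2(\mu_{m,\varepsilon}^{n_j}(R))$ for all $j$, the first of which immediately gives $\zeta\in Q_m(f)$.

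Everything then comes down to forcing $\zeta\notin A(f)$. Let $H(n)$ be the number of $M$-iterates needed to reach $a_n$, so that $M^{H(n)}(R)\le\mu_{m,\varepsilon}^n(R)<M^{H(n)+1}(R)$, and put $d(n)=n-H(n)$. The key point I would isolate as a lemma is that $d$ is non-decreasing: since $\mu_{m,\varepsilon}(r)<M(r)$ and $M$ is increasing, one $\mu_{m,\varepsilon}$-step cannot overtake one $M$-step, whence $\mu_{m,\varepsilon}^{n+1}(R)<M(M^{H(n)+1}(R))=M^{H(n)+2}(R)$ and so $H(n+1)\le H(n)+1$. The chosen $\varepsilon$ makes the failure of $m$-weak regularity say exactly that $\sup_n d(n)=\infty$, and monotonicity upgrades this to $d(n)\to\infty$. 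Consequently $\mu_{m,\varepsilon}^{n_j}(R)\le M^{n_j-d(n_j)}(R)$ with $d(n_j)\to\infty$, so for each fixed $\ell$ and all large $j$ one has $\lvert f^{n_j}(\zeta)\rvert\le M^2(\mu_{m,\varepsilon}^{n_j}(R))\le M^{n_j-d(n_j)+2}(R)<M^{n_j-\ell}(R)$. Thus no shift $\ell$ can make $\lvert f^{n+\ell}(\zeta)\rvert\ge M^n(R)$ hold for all $n$, so $\zeta\notin A(f)$ and the contrapositive is complete.

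The delicate step, and the one I expect to be the main obstacle, is precisely the monotonicity of $d$, that is, the upgrade from $\sup_n d(n)=\infty$ to $d(n)\to\infty$. Theorem \ref{RSth} hands us a single slow-down subsequence $(n_j)$ that we cannot choose; without knowing that the lag $d$ grows eventually everywhere, that subsequence could in principle avoid the indices where $\mu_{m,\varepsilon}$ falls far behind $M$, and the estimate defeating each shift $\ell$ would collapse. Establishing $H(n+1)\le H(n)+1$ cleanly from $\mu_{m,\varepsilon}<M$ is therefore the crux that makes the uncontrolled subsequence harmless.
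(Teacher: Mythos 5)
Your proof is correct and follows essentially the same route as the paper: the forward direction is the same splicing of inequalities, and the backward direction applies Theorem \ref{RSth} to $a_n=\mu_{m,\varepsilon}^n(R)$ and defeats every shift $\ell$ by propagating the failure inequality forward --- your monotone lag $d(n)$ is exactly the paper's observation that $\mu_{m,\varepsilon}^{n+\ell}(R)<M^n(R)$, once it holds at some $n(\ell)$, persists for all $n>n(\ell)$ because one $\mu_{m,\varepsilon}$-step cannot overtake one $M$-step. The only slip is a harmless off-by-one: your definition of $H$ gives $\mu_{m,\varepsilon}^{n_j}(R)<M^{n_j-d(n_j)+1}(R)$ rather than $\le M^{n_j-d(n_j)}(R)$, which costs one extra iterate of $M$ in the final estimate and changes nothing since $d(n_j)\to\infty$.
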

\begin{proof}
Suppose that $f$ is $m$-weakly regular and let $R>0$ be such that $M(r)>r$ for $r\geq R.$ Then there exists $r=r(R)>0$ such that 
$$\mu_{m,\varepsilon}^n(r) \geq M^n(R), \;\;\text{for}\;\;n \in \mathbb{N}.$$
If $z \in Q_m(f)$, then there exist $\varepsilon \in (0,1)$ and $\ell \in \mathbb{N}$ such that 
$$\lvert f^{n+\ell}(z)\rvert \geq \mu_{m,\varepsilon}^n(R),\;\;\text{for}\;\;n \in \mathbb{N}.$$
Let $r=r(R)$ be as above. Then there exists $N \in \mathbb{N}$ such that $\mu_{m,\varepsilon}^N(R)>r$ so
$$\lvert f^{n+\ell+N}(z)\rvert \geq \mu_{m,\varepsilon}^{n+N}(R) \geq \mu_{m,\varepsilon}^n(r) \geq M^n(R),\;\;\text{for}\;\;n \in \mathbb{N},$$ and hence $z \in A(f)$. Thus $Q_m(f) \subset A(f).$ Clearly $A(f) \subset Q_m(f)$ and so we have $Q_m(f)= A(f)$ as claimed.

In order to show that the opposite direction of the theorem is also true we will prove that if $f$ is not $m$-weakly regular then $Q_m(f)\setminus A(f)$ is non-empty. Take $R>0$ such that $\mu_{m,\varepsilon}(r) \geq r$, for $r \geq R$. Since $f$ is not weakly-log-regular, for any $\ell \in \mathbb{N}$ there exists $n(\ell) \in \mathbb{N}$ such that $\mu_{m,\varepsilon}^{n(\ell) + \ell}(R) < M^{n(\ell)}(R)$ and hence,  for any $n \in \mathbb{N}$ with $n> n(\ell),$ we have 
 \begin{equation}
 \label{notmweak}
 \mu_{m,\varepsilon}^{n+\ell}(R)< M^n(R).
 \end{equation}

Now, by Theorem \ref{RSth}, with $a_n= \mu_{m,\varepsilon}^{n}(R), n \in \mathbb{N},$ there exists a point $\zeta$ and a sequence $(n_j) \to \infty$ as $j \to \infty $, such that
\begin{equation}
\label{sq1.9}
\lvert f^n(\zeta) \rvert \geq \mu_{m,\varepsilon}^{n}(R),\;\;\text{for}\;\;n \in \mathbb{N},
\end{equation}
and
\begin{equation}
\label{sq1.10}
\lvert f^{n_j}(\zeta) \rvert \leq M^2(\mu_{m,\varepsilon}^{n_j}(R)), \;\;\text{for}\;\;j \in \mathbb{N}.
\end{equation}
It follows from (\ref{sq1.9}) that $\zeta \in Q_m(f).$ Also, (\ref{notmweak}) and (\ref{sq1.10}) together imply that, for each $\ell \in \mathbb{N}$ and sufficiently large values of $j$, we have
\begin{eqnarray}
\lvert f^{(n_j-\ell +2)+\ell -2}(\zeta) \rvert &=& \lvert f^{n_j}(\zeta) \rvert \nonumber \\
& \leq & M^2(\mu_{m,\varepsilon}^{n_j}(R))\nonumber \\
&<& M^2(M^{n_j-\ell}(R))\nonumber \\
&=&  M^{n_j-\ell+2}(R).\nonumber
\end{eqnarray}
 Hence, $\zeta \notin A(f),$ so $Q_m(f) \neq A(f)$, as required.
\end{proof}

We now give the proof of Theorem \ref{ordqm}. The proof is in two steps. First, we prove the following result which implies that all $m$-log-regular functions satisfy $Q_m(f)=A(f).$

\begin{theorem}
\label{mlogmweak}
Let $f$ be a transcendental entire function. If $f$ is $m$-log-regular, then $f$ is $m$-weakly regular and hence $Q_m(f)=A(f)$.
\end{theorem}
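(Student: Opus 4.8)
The plan is to read the defining inequality (\ref{mlog}) of $m$-log-regularity as a one-sided semiconjugacy between $\mu_{m,\varepsilon}$ and $M$. Fixing $\varepsilon \in (0,1)$ and writing $\phi(r) = \exp^{m-1}(r^k)$, the condition (\ref{mlog}) asserts precisely that
$$\mu_{m,\varepsilon}(\phi(r)) \geq \phi(M(r)), \quad \text{for } r \geq R,$$
for some $R>0$ and $k>1$ depending on $\varepsilon$; I would enlarge $R$ if necessary so that also $M(r)>r$ for $r \geq R$. Since $M$, $\log^m$ and $\exp^m$ are all increasing, the map $\mu_{m,\varepsilon}(r) = \exp^m(\varepsilon \log^m M(r))$ is increasing for large $r$, and $\phi$ is increasing because $k>1$. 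It is this monotonicity of all three maps that lets the semiconjugacy be iterated.

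The central step is to prove by induction on $n$ that
$$\mu_{m,\varepsilon}^n(\phi(r)) \geq \phi(M^n(r)), \quad \text{for } r \geq R \text{ and } n \in \mathbb{N}.$$
The base case $n=1$ is the semiconjugacy itself. For the inductive step I would apply $\mu_{m,\varepsilon}$ to the inductive hypothesis and use monotonicity to get $\mu_{m,\varepsilon}^{n+1}(\phi(r)) \geq \mu_{m,\varepsilon}(\phi(M^n(r)))$, and then apply the semiconjugacy at the point $M^n(r)$ to conclude $\mu_{m,\varepsilon}(\phi(M^n(r))) \geq \phi(M^{n+1}(r))$. The only bookkeeping is to check that every argument fed into the base inequality remains in the range $[R,\infty)$, which holds because $M(x)>x$ for $x \geq R$ gives $M^n(r) \geq r \geq R$ throughout.

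Next I would discard the outer $\phi$ on the right. Since $k>1$ and $\exp^{m-1}$ is increasing with $\exp^{m-1}(y) \geq y$, we have $\phi(M^n(r)) = \exp^{m-1}(M^n(r)^k) \geq M^n(r)$ for $r \geq R$. Combining this with the iterated inequality yields $\mu_{m,\varepsilon}^n(\phi(R)) \geq M^n(R)$ for all $n$, so taking $r_0 = \phi(R) = \exp^{m-1}(R^k)$ gives exactly the defining property of $m$-weak-regularity at the base value $R$. To obtain $m$-weak-regularity for an arbitrary admissible base value $R'$, I would note that the property propagates downward: if $R' \leq R$ then $M^n(R') \leq M^n(R) \leq \mu_{m,\varepsilon}^n(r_0)$, so the same $r_0$ works, while the case $R'>R$ is handled by rerunning the argument with $R$ replaced by $\max(R',R)$. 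Hence $f$ is $m$-weakly regular, and the conclusion $Q_m(f)=A(f)$ follows immediately from Theorem \ref{m-weak_necsuf}.

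The step I expect to demand the most care is conceptual rather than computational: recognising (\ref{mlog}) as the clean semiconjugacy $\mu_{m,\varepsilon} \circ \phi \geq \phi \circ M$, since a direct attempt to compare the iterates $\mu_{m,\varepsilon}^{n+\ell}(R)$ and $M^n(R)$ step by step does not make the per-iteration gain transparent. Once the semiconjugacy form is in place, the remaining obstacle is purely the verification that all three maps are monotone on the relevant half-line and that the iteration never leaves the region where (\ref{mlog}) is valid; after that, the induction and the final comparison are routine.
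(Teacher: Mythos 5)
Your proof is correct and is essentially the paper's own argument: the paper also iterates the inequality $\mu_{m,\varepsilon}(\exp^{m-1}(r^k)) \geq \exp^{m-1}(M(r)^k)$ using monotonicity to get $\mu_{m,\varepsilon}^n(\exp^{m-1}(r^k)) \geq \exp^{m-1}(M^n(r)^k) \geq M^n(r) \geq M^n(R)$, and then invokes Theorem \ref{m-weak_necsuf}. Your semiconjugacy framing and the explicit handling of arbitrary base values are just more careful packaging of the same steps.
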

\begin{proof}
Suppose that $f$ is $m$-log-regular and let $0< \varepsilon<1.$ Let $R>0$ be so large that $M(r)>r$ for $r \geq R.$ Since $f$ is $m$-log-regular, for any $\varepsilon \in (0,1)$ there exists $r_0\geq R$ and $k>1$ such that
$$\mu_{m,\varepsilon}(\exp^{m-1}(r^k)) \geq \exp^{m-1}(M(r)^k),\;\;\text{for}\;\;r\geq r_0.$$
Hence, 
\begin{eqnarray}
\mu_{m,\varepsilon}(\mu_{m,\varepsilon}(\exp^{m-1}(r^k))) &\geq & \mu_{m,\varepsilon}(\exp^{m-1}(M(r)^k))\nonumber \\
& \geq & \exp^{m-1}((M(M(r))^k)\nonumber 
\end{eqnarray}
and so, using this argument repeatedly, we have
$$\mu_{m,\varepsilon}^n(\exp^{m-1}(r^k)) \geq \exp^{m-1}(M^n(r)^k),\;\;\text{for}\;\;r \geq r_0\;\;\text{and}\;\;n \in \mathbb{N}.$$
Thus, whenever $r \geq r_0$, we have
$$\mu_{m,\varepsilon}^n(\exp^{m-1}(r^k)) \geq M^n(r) \geq M^n(R),\;\;\text{for}\;\;n \in \mathbb{N},$$
and so $f$ is $m$-weakly regular. Hence, by Theorem \ref{m-weak_necsuf}, $Q_m(f)=A(f).$ \end{proof}

The second part of the proof of Theorem \ref{ordqm} is to show that all functions of finite order and positive lower order are $m$-log-regular. In order to prove this we will need the following lemma.

\begin{lemma}
\label{genlemma2}
For any $n \in \mathbb{N}$ and any $d>0$, $0<q<1$, there exists $R>0$ such that 
\begin{equation}
\label{gl01}
\log^n (r^q) > d(\log^n r)^q,\;\;\text{for}\;\;r\geq R.
\end{equation}
\end{lemma}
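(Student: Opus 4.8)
The plan is to argue by induction on $n$, keeping the constant $d$ and the exponent $q$ universally quantified at each stage so that the inductive hypothesis can be reused with a different constant and a different exponent. For the base case $n=1$, the claim reads $q\log r > d(\log r)^q$, which holds for $r$ large because
$$
\frac{q\log r}{d(\log r)^q} = \frac{q}{d}(\log r)^{1-q} \to \infty \quad\text{as } r \to \infty,
$$
since $1-q>0$.

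For the inductive step, I assume (\ref{gl01}) holds for $n$ and all admissible $d,q$, and I want to establish it for $n+1$. Writing $s=\log r$, the two sides become $\log^{n+1}(r^q) = \log^n(q\log r) = \log^n(qs)$ and $d(\log^{n+1} r)^q = d(\log^n s)^q$, where $s\to\infty$ as $r\to\infty$. The difficulty, and the step I expect to be the main obstacle, is that the argument $qs$ on the left is a multiplicative perturbation of $s$ rather than a pure power $s^{q'}$, so the inductive hypothesis cannot be applied directly. To bridge this, I fix an intermediate exponent $q'$ with $q<q'<1$. Since $qs/s^{q'} = q\,s^{1-q'}\to\infty$, we have $qs \geq s^{q'}$ for all large $s$, and hence $\log^n(qs) \geq \log^n(s^{q'})$ because $\log^n$ is increasing.

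Next I apply the inductive hypothesis with exponent $q'$ and constant $1$ to obtain $\log^n(s^{q'}) > (\log^n s)^{q'}$ for $s$ large. Finally, because $q'>q$ and $\log^n s \to \infty$, the factor $(\log^n s)^{q'-q}\to\infty$, so
$$
(\log^n s)^{q'} = (\log^n s)^q\,(\log^n s)^{q'-q} > d(\log^n s)^q \quad\text{for } s \text{ large.}
$$
Chaining the three inequalities gives $\log^n(qs) > d(\log^n s)^q$ for $s$, and hence $r$, large enough, which completes the induction. The key idea is that a sublinear multiplicative perturbation $qs$ still dominates a slightly larger power $s^{q'}$, after which the residual gain $(\log^n s)^{q'-q}\to\infty$ comfortably absorbs the fixed constant $d$.
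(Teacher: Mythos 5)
Your proof is correct and follows essentially the same route as the paper: induction on $n$ after the substitution $s=\log r$, with the base case handled by $(q/d)(\log r)^{1-q}\to\infty$. The only (minor) difference is in how the perturbed argument is handled: the paper applies the inductive hypothesis with the same $d$ and $q$ at the argument $\log r$ and then reduces to the single inequality $q\log r\geq(\log r)^q$, whereas you invoke the hypothesis with an intermediate exponent $q'\in(q,1)$ and constant $1$ and absorb $d$ via $(\log^n s)^{q'-q}\to\infty$; both are valid.
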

\begin{proof}
We will prove (\ref{gl01}) using induction. For $n=1$,
$$q\log r > d (\log r)^q,\;\; \text{for}\;\;r\;\;\text{large enough}.$$
Suppose that (\ref{gl01}) is true for some $n \in \mathbb{N}.$ Then
$$d (\log ^{n+1} r)^q= d (\log ^n (\log r))^q < \log^n ((\log r)^q),\;\;\text{for}\;\;r\;\;\text{large enough}.$$
Hence, in order to prove (\ref{gl01}) it suffices to show that there exists $R>0$ such that
$$\log^{n+1}(r^q)> \log ^n ((\log r)^q),\;\;\text{for}\;\;r \geq R,\;n \in \mathbb{N},$$
or equivalently that
$$\log (r^q) >(\log r)^q,\;\;\text{for}\;\;r \geq R,$$
which is true, and so, the result follows. 
\end{proof}
We now prove the following result.
\begin{theorem}
\label{folpomlog}
Let $f$ be a transcendental entire function of finite order and positive lower order. Then $f$ is $m$-log-regular.
\end{theorem}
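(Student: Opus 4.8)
The plan is to strip the definition of $m$-log-regularity, condition \eqref{mlog}, down to a single inequality relating $\log^m M$ to $\log M$, and then feed in the two standard growth estimates coming from finite order and positive lower order. Recalling that $\mu_{m,\varepsilon}(r)=\exp^m(\varepsilon\log^m M(r))$, and using that $\exp$ and $\log$ are strictly increasing, I would apply $\log^m$ to both sides of \eqref{mlog}. The left-hand side collapses to $\varepsilon\log^m M(\exp^{m-1}(r^k))$ and the right-hand side to $k\log M(r)$, so \eqref{mlog} is equivalent to
$$\varepsilon\log^m M(\exp^{m-1}(r^k)) \geq k\log M(r),\;\;\text{for}\;\;r\geq R.$$
It therefore suffices to show that, for every $\varepsilon\in(0,1)$, one can choose $k>1$ so that this reduced inequality holds for all large $r$.

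Next I would bring in the hypotheses. Write $\rho$ for the (finite) order and $\lambda>0$ for the lower order. For any $\delta>0$, finite order gives the upper bound $\log M(r)\leq r^{\rho+\delta}$ for all large $r$, so the right-hand side is at most $kr^{\rho+\delta}$. For the left-hand side I fix $q$ with $0<q<\min(\lambda,1)$; positive lower order then yields $\log M(s)\geq s^q$ for all large $s$. Substituting $s=\exp^{m-1}(r^k)$ and applying the increasing map $\log^{m-1}$ gives
$$\log^m M(\exp^{m-1}(r^k)) \geq \log^{m-1}\big((\exp^{m-1}(r^k))^q\big).$$
The crucial step is to turn this $\log^{m-1}$ of a power into a power of $\log^{m-1}$, and this is exactly what Lemma~\ref{genlemma2} supplies: applying it with its variable replaced by $\exp^{m-1}(r^k)$, with $n=m-1$ and exponent $q$, produces a constant $d>0$ with
$$\log^{m-1}\big((\exp^{m-1}(r^k))^q\big) > d\big(\log^{m-1}\exp^{m-1}(r^k)\big)^q = d\,r^{kq}.$$

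Combining the two bounds, the reduced inequality follows once $\varepsilon d\,r^{kq}\geq kr^{\rho+\delta}$ for large $r$, which holds as soon as $kq>\rho+\delta$. Since $\lambda>0$ allowed me to take $q>0$, I simply fix $\delta$ small and then choose $k$ large enough that $kq>\rho+\delta$ (in particular $k>1$), and the argument is complete. I expect the main obstacle to be the careful bookkeeping of the iterated exponentials and logarithms, especially confirming the equivalence of \eqref{mlog} with the reduced inequality and tracking where the lower-order exponent $q$ survives; the genuinely delicate estimate, bounding $\log^m M(\exp^{m-1}(r^k))$ from below, is precisely where Lemma~\ref{genlemma2} and the positivity of the lower order are indispensable, since with $\lambda=0$ one could not take $q>0$ and the left-hand side would fail to dominate $k\log M(r)$.
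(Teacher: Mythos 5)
Your proposal is correct and follows essentially the same route as the paper: reduce \eqref{mlog} by taking $\log^m$ of both sides, bound $\log M(r)$ above by $r^{p}$ (finite order) and $\log M(s)$ below by $s^{q}$ (positive lower order), and then invoke Lemma~\ref{genlemma2} with the substitution $s=\exp^{m-1}(r^k)$ to convert $\log^{m-1}$ of a power into a power of $\log^{m-1}$, finally choosing $k$ with $kq>p$. The only cosmetic difference is that you collapse $\log^m(\exp^{m-1}(M(r)^k))$ to $k\log M(r)$ immediately rather than bounding it by $kr^{p}$ in one step, which changes nothing of substance.
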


\begin{proof}
Let $f$ be a transcendental entire function of finite order and positive lower order. We begin by noting that there exist $0<q<p$ such that
\begin{equation}
\label{ord}
e^{r^q} \leq M(r) \leq e^{r^p},\;\;\text{for}\;\;r\;\;\text{large enough}
\end{equation}

By the definition of $\mu_{m,\varepsilon},$ in order to prove that $f$ is $m$-log-regular, that is, that, for any $\varepsilon \in (0,1),$ there exist $R>0$ and $k>1$ such that $f$ satisfies (\ref{mlog}) or, equivalently,
\begin{equation}
\label{mlog1}
\varepsilon \log^m M(\exp^{m-1}(r^k))\geq \log^m( \exp^{m-1}(M(r)^k)),\;\;\text{for}\;\;r\;\;\text{large enough}.
\end{equation}
But (\ref{ord}) implies that 
$$\log^m M(\exp^{m-1}(r^k)) \geq \log^{m-1}((\exp^{m-1}(r^k))^q)$$
and 
$$\log^m( \exp^{m-1}(M(r)^k)) \leq \log^m( \exp^m(kr^p))=kr^p,$$
and so (\ref{mlog1}) is implied by
$$\varepsilon \log^{m-1}((\exp^{m-1}(r^k))^q)\geq kr^p,$$ that is,
\begin{equation}
\label{mlog2}
(\exp^{m-1}(r^k))^q \geq \exp^{m-1}(\frac{kr^p}{\varepsilon}),\;\;\text{for}\;\;r\;\;\text{large enough}.
\end{equation}
We set $r^k= \log^{m-1} s$ and (\ref{mlog2}) becomes
\begin{equation}
\label{mlog3}
\log^{m-1} (s^q) \geq \frac{k}{\varepsilon} (\log^{m-1} s)^{p/k},\;\;\text{for}\;\;s
\;\;\text{large enough}. 
\end{equation}
If we choose $k> p/q$ then, for any $\varepsilon \in (0,1)$, (\ref{mlog3}) holds for $s$ large enough, by Lemma \ref{genlemma2}.\end{proof}

 It is easy to see that if we combine Theorem \ref{mlogmweak} and Theorem \ref{folpomlog} we obtain Theorem \ref{ordqm}.

\section{Growth conditions for $Q_m(f)=A(f)$}
In this section we give a second proof of Theorem \ref{ordqm} by introducing a growth condition, given in the following theorem, which implies that $Q_m(f)=A(f).$ The same lower bound for $m=2$ appears in \cite[Theorem 6]{Smallg}.
\begin{theorem}
\label{ineqvreg}
Let $f$ be a transcendental entire function, $m \geq 2$ and $\phi_m(t)= \log^{m-1} M(\exp^{m-1}(t)).$ If there exist $0<q<1$ and $0<\tilde{q}< \infty$ such that, for some $n\geq 0$,
\begin{equation}
\label{dineqreg}
\exp^{n+m-1}((\log^{n+m-2}t)^q)\leq \phi_m(t) \leq \exp^{n+m-1}((\log^{n+m-2}t)^{\tilde{q}}),\;\;\text{for}\;\;t\;\;\text{large enough},
\end{equation}
then 
\begin{itemize}
\item[(i)]for any $d>1$ there exists $t_0>0$ such that 
$$ \phi_m(\psi_m(t)) \geq (\psi_m(\phi_m(t)))^d,\;\;\text{for}\;\;t \geq t_0,$$
where $\psi_m(t)= \exp^{n+m-1}((\log^{n+m-1}t)^p),$ $pq>1;$\\

\item[(ii)] $f$ is m-weakly regular and so $Q_m(f)=A(f).$
\end{itemize}
\end{theorem}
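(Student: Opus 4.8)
My plan is to establish (i) by direct estimation of the two sides and then to deduce (ii) from (i) by passing to a conjugate dynamical system at the level of $\log^{m-1}$.

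\emph{Part (i).} I would insert the explicit form of $\psi_m$ into both sides and estimate the occurrences of $\phi_m$ by the two halves of (\ref{dineqreg}): the lower bound on the left and the upper bound on the right. Repeatedly using $\log^{j}\exp^{k}=\exp^{k-j}$ and writing $u=\log^{n+m-1}t$, I expect $\log^{n+m-2}\psi_m(t)=\exp(u^{p})$, so that the left-hand side is bounded below by $\exp^{n+m}(q\,u^{p})=\exp(\exp^{n+m-1}(q\,u^{p}))$, while the upper bound gives $\psi_m(\phi_m(t))\le\exp^{n+m-1}((\log^{n+m-2}t)^{\tilde q p})$ and hence $(\psi_m(\phi_m(t)))^{d}\le\exp(d\,\exp^{n+m-1}(\tilde q p\,u))$. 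Peeling off the common outermost exponential reduces the claim to
$$\exp^{n+m-1}(q\,u^{p})\ \ge\ d\,\exp^{n+m-1}(\tilde q p\,u),\qquad u\ \text{large}.$$
Because $pq>1$ and $q<1$ force $p>1$, the exponent $q\,u^{p}$ eventually dominates $\tilde q p\,u$ with a difference tending to infinity; applying $\exp^{n+m-1}$ this makes the left side exceed any fixed multiple $d$ of the right side once $u$ is large, which yields the required $t_0=t_0(d)$.

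\emph{Part (ii).} The main idea is that, at the level of $\log^{m-1}$, both $M$ and $\mu_{m,\varepsilon}$ conjugate to elementary functions of $\phi_m$. Since $\exp^{m-1}(\log^{m-1}x)=x$ for large $x$, one checks $M^{n}(R)=\exp^{m-1}(\phi_m^{n}(\log^{m-1}R))$; and starting from $\mu_{m,\varepsilon}(r)=\exp^{m}(\varepsilon\log^{m}M(r))$ a short computation gives $\mu_{m,\varepsilon}(\exp^{m-1}(s))=\exp^{m-1}(\nu_\varepsilon(s))$, where $\nu_\varepsilon(s):=\phi_m(s)^{\varepsilon}$, hence $\mu_{m,\varepsilon}^{n}(r)=\exp^{m-1}(\nu_\varepsilon^{n}(\log^{m-1}r))$. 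As $\exp^{m-1}$ is increasing, $m$-weak regularity is equivalent to producing, for each $\varepsilon$, a value $s$ with $\nu_\varepsilon^{n}(s)\ge\phi_m^{n}(S)$ for all $n$, where $S=\log^{m-1}R$. To find such an $s$ I would invoke part (i) with $d=1/\varepsilon>1$. Raising the inequality of (i) to the power $\varepsilon$ converts it into the commutation relation $\nu_\varepsilon(\psi_m(t))\ge\psi_m(\phi_m(t))$ for $t\ge t_0$. Since $\nu_\varepsilon$ is increasing and $\phi_m(t)>t$ (so that all iterates remain above $t_0$), an induction on $n$ gives $\nu_\varepsilon^{n}(\psi_m(t))\ge\psi_m(\phi_m^{n}(t))$; and because $\psi_m(x)\ge x$ for large $x$ this yields $\nu_\varepsilon^{n}(\psi_m(t))\ge\phi_m^{n}(t)$. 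Taking $S=t$ large and $s=\psi_m(t)$ establishes the displayed inequality, so $f$ is $m$-weakly regular, and Theorem \ref{m-weak_necsuf} gives $Q_m(f)=A(f)$.

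I expect the main obstacle to be the bookkeeping in part (i): correctly collapsing the nested towers of $\exp$ and $\log$ on each side and, in particular, verifying that the factor $d$ is absorbed, i.e.\ that the ratio of the two $\exp^{n+m-1}$-towers tends to infinity (including the boundary case $n=0$, $m=2$, where the innermost comparison is simply $q\,u^{p}$ against $\tilde q p\,u$). By contrast, part (ii) is conceptually the crux but technically light once the conjugacy $\mu_{m,\varepsilon}\leftrightarrow\nu_\varepsilon=\phi_m^{\varepsilon}$ and the choice $d=1/\varepsilon$ are in place.
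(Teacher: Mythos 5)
Your proposal is correct and follows essentially the same route as the paper: part (i) by substituting the two halves of (\ref{dineqreg}) into $\phi_m(\psi_m(t))$ and $\psi_m(\phi_m(t))$, collapsing the $\exp$/$\log$ towers, and absorbing the exponent $d$ via the fact that the ratio of the two $\exp^{n+m-1}$-towers tends to infinity (using $pq>1$); part (ii) via the conjugacy $M^n(r)=\exp^{m-1}(\phi_m^n(\log^{m-1}r))$, $\mu_{m,\varepsilon}^n(r)=\exp^{m-1}((\phi_m^{\varepsilon})^n(\log^{m-1}r))$ and the choice $d=1/\varepsilon$, which turns (i) into the commutation inequality that is then iterated. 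The paper's proof is the same argument with only cosmetic differences (it phrases the iteration through $\psi_m^{-1}$ rather than an explicit induction).
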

\begin{remark}
As $0<q<1$, the left bound in (\ref{dineqreg}) becomes smaller as $n$ increases. If we also take $\tilde{q}>1,$ then the right bound increases with $n$ and hence the condition (\ref{dineqreg}) is  more easily satisfied for larger $n$. As we will prove in Theorem \ref{order-growth}, all functions of positive lower order and finite order satisfy (\ref{dineqreg}).
\end{remark} 
\begin{proof}
(i) We have that 
\begin{eqnarray}
\psi_m(\phi_m(t)) &\leq& \exp^{n+m-1}((\log^{n+m-1}(\exp^{n+m-1}(\log^{n+m-2}t)^{\tilde{q}}))^p)\nonumber \\
&=& \exp^{n+m-1}((\log^{n+m-2}t)^{\tilde{q} p}) \nonumber
\end{eqnarray}
and also
\begin{eqnarray}
\phi_m(\psi_m(t)) &\geq& \exp^{n+m-1}((\log^{n+m-2}(\exp^{n+m-1}((\log ^{n+m-1}t)^p)))^q) \nonumber \\
&=& \exp^{n+m-1}((\exp ((\log^{n+m-1}t)^p))^q)\nonumber \\
&=&\exp^{n+m}(q(\log^{n+m-1}t)^p)\nonumber \\
&\geq & \exp^{n+m}((\log^{n+m-1}t)^{pq})\nonumber \\
& \geq & (\exp^{n+m-1} ((\log^{n+m-2}t)^{p \tilde{q}}))^d,\;\;\text{for any}\;\;d>1\;\;\text{and for}\;\;t\;\;\text{large enough},\nonumber 
\end{eqnarray}
since putting $w= \log^{n+m-1}t$ gives 
\begin{eqnarray}
\frac{\exp^{n+m-1}((\log^{n+m-1}t)^{pq})}{\exp^{n+m-2}((\log^{n+m-2}t)^{p \tilde{q}})}&=& \frac{\exp^{n+m-1}(w^{pq})}{\exp^{n+m-2}((e^w)^{p\tilde{q}})}\nonumber \\
&=& \frac{\exp^{n+m-1}(w^{pq})}{\exp^{n+m-2}(e^{p\tilde{q}w})}\nonumber \\
&=& \frac{\exp^{n+m-1}(w^{pq})}{\exp^{n+m-1}(p\tilde{q}w)} \to \infty\;\;\text{as}\;\;w \to \infty \nonumber
\end{eqnarray}
 and so $$\exp^{n+m-1}((\log^{n+m-1}t)^{pq}) \geq d\exp^{n+m-2}((\log^{n+m-2}t)^{p \tilde{q}}),\;\;\text{for}\;\;t\;\;\text{large enough}.$$
 Thus
$$ \phi_m(\psi_m(t)) \geq (\psi_m(\phi_m(t)))^d,\;\;d>1,\;\;\text{for}\;\;t\;\;\text{large enough}.$$
(ii) Now let $\phi_{m,{\varepsilon}}(t)= \phi_m(t)^{\varepsilon}$ and note that from the definition of $\phi_m$,
$$M^n(r)= \exp^{m-1}(\phi_m^n(\log^{m-1} r)).$$ 
Note also that 
\begin{eqnarray}
\mu_{m,{\varepsilon}}(r)&=& \exp^m(\varepsilon \log^m M(r)) \nonumber \\
&=& \exp^m(\varepsilon \log^m(\exp^{m-1}\phi_m(\log^{m-1}r))) \nonumber \\
&=& \exp^m(\log(\phi_{m,\varepsilon}(\log^{m-1}r))\nonumber \\
&=& \exp^{m-1}(\phi_{m,\varepsilon}(\log^{m-1}r)),\nonumber
\end{eqnarray}
and so, 
$$\mu_{m,{\varepsilon}}^n(r)= \exp^{m-1}(\phi_{m,{\varepsilon}}^n(\log^{m-1}r)).$$
Hence, in order to show that there exists $r= r(R)>0$  such that
$$\mu_{m,{\varepsilon}}^n(r) \geq M^n(R),\;\;\text{for}\;\;n \in \mathbb{N},$$
it suffices to show that there exists $r= r(R)>0$  such that
$$\phi_{m,{\varepsilon}}^n(r) \geq \phi_m^n(R),\;\;\text{for}\;\;n \in \mathbb{N}.$$
We showed in (i) that, for any $d>1$, $\phi_m(\psi_m(t)) \geq (\psi_m(\phi_m(t)))^d$, if $t$ is sufficiently large, or, equivalently, that given $\varepsilon>0$
$$\phi_{m,{\varepsilon}}(\psi_m(t)) \geq \psi_m(\phi_m(t)),\;\;\text{for}\;\;t\;\;\text{large enough}.$$
Therefore, 
$$\phi_{m,{\varepsilon}}(s) \geq \psi_m(\phi_m(\psi_m^{-1}(s))),\;\;\text{for}\;\;s\;\;\text{large enough}.$$
Since $\psi_m(t) \geq t$, by iterating we obtain
$$\phi_{m,{\varepsilon}}^n(s) \geq \psi_m(\phi_m^n(\psi_m^{-1}(s)))\geq \phi_m^n(\psi_m^{-1}(s)),\;\;\text{for}\;\;s\;\;\text{large enough}.$$
The result follows. 
\end{proof}

In order to complete the proof of Theorem \ref{ordqm} it remains to show that Theorem \ref{ineqvreg} can be applied to functions of finite order and positive lower order.
\begin{theorem}
\label{order-growth}
Let $f$ be a transcendental entire function of finite order and positive lower order. Then $f$ satisfies the hypotheses of Theorem \ref{ineqvreg} and hence $Q_m(f)=A(f).$
\end{theorem}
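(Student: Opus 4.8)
The plan is to verify the sandwich condition (\ref{dineqreg}) directly, with the simplest choice $n=0$, and then quote Theorem \ref{ineqvreg}(ii). First I would record that finite order together with positive lower order yields constants $0<q_0\le p_0<\infty$ with $e^{r^{q_0}}\le M(r)\le e^{r^{p_0}}$ for all large $r$, exactly the bound (\ref{ord}) already used in the proof of Theorem \ref{folpomlog}. Substituting $r=\exp^{m-1}(t)$ into these inequalities and applying $\log^{m-1}$ (which is increasing) converts them into two-sided bounds on $\phi_m(t)=\log^{m-1}M(\exp^{m-1}t)$, namely
\begin{equation*}
\log^{m-2}\big((\exp^{m-1}t)^{q_0}\big)\le \phi_m(t)\le \log^{m-2}\big((\exp^{m-1}t)^{p_0}\big),
\end{equation*}
valid for large $t$. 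The point of this step is that the awkward function $M$ has been replaced by explicit towers of exponentials and logarithms in which only the fixed exponents $q_0,p_0$ appear.

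Next I would fix any $q\in(0,1)$ and any $\tilde q>1$ and take $n=0$ in (\ref{dineqreg}), so that the two target bounds read $\exp^{m-1}((\log^{m-2}t)^q)$ and $\exp^{m-1}((\log^{m-2}t)^{\tilde q})$. The key elementary observation is the identity $\exp^{m-1}(\log^{m-2}t)=e^t$; thus both targets are (small, resp. large) perturbations of $e^t$, and the explicit bounds on $\phi_m$ above are likewise perturbations of $e^t$ once the constants $q_0,p_0$ are absorbed into the tower (for $m\ge 3$), or kept as the growth rate $e^{q_0 t}\le \phi_2(t)\le e^{p_0 t}$ (for $m=2$). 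It then remains to establish the two comparisons
\begin{equation*}
\exp^{m-1}\big((\log^{m-2}t)^q\big)\le \log^{m-2}\big((\exp^{m-1}t)^{q_0}\big),\qquad \log^{m-2}\big((\exp^{m-1}t)^{p_0}\big)\le \exp^{m-1}\big((\log^{m-2}t)^{\tilde q}\big),
\end{equation*}
for large $t$, which are precisely inequalities between interchanged towers of logarithms and exponentials carrying powers $q<1$ and $\tilde q>1$. These are of exactly the type handled by Lemma \ref{indu} and Lemma \ref{genlemma2}: by writing each side as an iterated exponential/logarithm and comparing innermost arguments, each comparison reduces (after the constants are absorbed) to the scalar facts $(\log^{m-2}t)^q<\log^{m-2}t$ and $(\log^{m-2}t)^{\tilde q}>\log^{m-2}t$, amplified through the tower. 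Having verified (\ref{dineqreg}) with this $n,q,\tilde q$, the conclusion $Q_m(f)=A(f)$ is immediate from Theorem \ref{ineqvreg}(ii), choosing $p>1/q$ so that the constraint $pq>1$ in part (i) is met.

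The main obstacle is the second family of comparisons above; the passage from the order bound to bounds on $\phi_m$ and the final appeal to Theorem \ref{ineqvreg} are routine. The care needed is twofold. First, one must check that the multiplicative constants $q_0,p_0$ produced by (\ref{ord}) do not disturb the tower structure, i.e. that $a\exp^{k}(t)$ lies between $\exp^{k}(t-C)$ and $\exp^{k}(t+C)$ for a suitable constant $C$ and all large $t$. Second, one must check that for $q<1$ and $\tilde q>1$ the gaps $\log^{m-2}t-(\log^{m-2}t)^q$ and $(\log^{m-2}t)^{\tilde q}-\log^{m-2}t$ grow fast enough to survive the $m-1$ iterated exponentials. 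Both facts are elementary but must be tracked uniformly in $m$; I expect the cleanest route to be induction on the number of logarithms, mirroring the proofs of Lemma \ref{indu} and Lemma \ref{genlemma2}, so that the general case reduces to the base comparison $\log(r^q)>(\log r)^q$.
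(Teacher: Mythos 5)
Your proposal is correct and follows essentially the same route as the paper: take $n=0$, use the order/lower-order bounds $e^{r^{q}}\le M(r)\le e^{r^{p}}$ to sandwich $\phi_m(t)$ between $\log^{m-2}((\exp^{m-1}t)^{q})$ and $\log^{m-2}((\exp^{m-1}t)^{p})$, and then reduce the two tower comparisons (the paper's (\ref{og1}) and (\ref{og2})) to Lemma \ref{genlemma2} via the substitution $s=\exp^{m-1}t$. The only cosmetic difference is that the paper keeps the same exponents $q$ and $\tilde q=p$ on both sides of each comparison rather than tracking separate constants $q_0,p_0$.
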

\begin{proof}
As $f$ is of finite order and positive lower order, (\ref{ord}) implies that, for $m \geq 2$ there exist $q \in (0,1)$ and $p \in (q, \infty)$ such that 
\begin{equation}
\label{ord-g}
\log^{m-2}((\exp^{m-1}t)^q) \leq \phi_m(t)= \log^{m-1} M(\exp^{m-1}t) \leq \log^{m-2}((\exp^{m-1}t)^p),
\end{equation}
for $t$ large enough.

In order to show that (\ref{ord-g}) implies (\ref{dineqreg}), it suffices to show that 
\begin{equation}
\label{og1}
\exp^{m-1}((\log^{m-2}t)^q) \leq \log^{m-2}((\exp^{m-1}t)^q)
\end{equation}
and
\begin{equation}
\label{og2}
\log^{m-2}((\exp^{m-1}t)^p) \leq \exp^{m-1}((\log^{m-2}t)^p),
\end{equation}
for $t$ large enough.
Note that (\ref{og1}) is equivalent to 
$$(\log^{m-2} t)^q \leq \log ^{2m-3} (\exp ^{m-1} t)^q,\;\;\text{for}\;\;t\;\;\text{large enough},$$
which, for $s= \exp^{m-1} t,$ becomes
\begin{equation}
\label{applygl}
(\log^{2m-3}s)^q \leq \log^{2m-3} s^q\;\;\text{for}\;\;s\;\;\text{large enough}.
\end{equation}
By Lemma \ref{genlemma2}, (\ref{applygl}) holds for $s$ large enough and hence so does (\ref{og1}).

Similarly, using Lemma \ref{genlemma2}, one can show that (\ref{og2}) is true.\\
Therefore, the hypotheses of Theorem \ref{ineqvreg} are satisfied for $q$ and $p=\tilde{q}.$ \end{proof}

\section{2-log-regularity and strong log-regularity}
In \cite{Evd16} we introduced a sufficient condition for $Q_2(f)=A(f)$ called strong log-regularity. A transcendental entire function $f$ is strongly log-regular if,  for any $\varepsilon \in (0,1),$ there exist $R>0$ and $k>1$ such that, for $r>R,$ 
\begin{equation}
\label{nrc}
\log M(r^k) \geq (k\log M(r))^{1/\varepsilon}.
\end{equation} 
Both strong log-regularity and $2$-log-regularity imply $Q_2(f)=A(f)$ and also any transcendental entire function of finite order and positive lower order is both strongly log-regular and $2$-log-regular. Therefore it is of interest to know how these two conditions are related. For a function of finite order we have the following result.
\begin{theorem}
\label{strong2-log}
Let $f$ be a transcendental entire function of finite order. If $f$ is strongly log-regular then $f$ is $2$-log-regular.
\end{theorem}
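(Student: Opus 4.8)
Throughout write $L(r)=\log M(r)$. The plan is to follow the two-part structure of the proof of Theorem~\ref{folpomlog}: reduce $2$-log-regularity to a single growth inequality and then supply a lower bound for $M$ at large arguments, the only change being that the lower bound, obtained there from positive lower order, will now come from strong log-regularity. Specialising (\ref{mlog}) to $m=2$ and using $\mu_{2,\varepsilon}(s)=\exp((\log M(s))^{\varepsilon})$, the condition $2$-log-regularity is equivalent to the statement that for every $\varepsilon\in(0,1)$ there exist $R>0$ and $k>1$ with
\[
\varepsilon\,\log L(\exp(r^k))\ \ge\ k\,L(r),\qquad\text{for } r\ge R.
\]
Finite order supplies, for some $p>0$ and all large $r$, the upper bound $L(r)\le r^{p}$, which controls the right-hand side, so the whole difficulty is to bound the left-hand side $L(\exp(r^k))$ from below --- and this is exactly what strong log-regularity will provide.

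Next I would convert (\ref{nrc}) into a self-improving inequality. Fix once and for all some $\varepsilon_0\in(0,1)$ and let $R_0\ge1$, $k_0>1$ be the constants it supplies for $\varepsilon_0$. Since $k_0>1$, (\ref{nrc}) gives $L(r^{k_0})\ge(k_0L(r))^{1/\varepsilon_0}\ge L(r)^{\alpha}$ for $r\ge R_0$, where $\alpha:=1/\varepsilon_0>1$. As every intermediate argument $r^{k_0^{\,j}}$ is at least $r\ge R_0$, a straightforward induction then yields
\[
L\big(r^{k_0^{\,n}}\big)\ \ge\ L(r)^{\alpha^{\,n}},\qquad\text{for } r\ge R_0,\ n\in\mathbb{N}.
\]

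The main obstacle, and the crux of the argument, is the scale mismatch: (\ref{nrc}) only advances the argument of $M$ from $r$ to the polynomial $r^{k_0}$, whereas the target inequality needs information about $M$ at the exponential argument $\exp(r^k)$. I would bridge this by applying the last displayed inequality with a number of iterations that itself grows with $r$. Setting $\beta:=\log\alpha/\log k_0>0$ and $n=n(r)=\lfloor\log_{k_0}(r^{k}/\log r)\rfloor$, one checks that $r^{k_0^{\,n}}\le\exp(r^k)$, so by monotonicity of $M$ and the iterated inequality,
\[
\log L(\exp(r^k))\ \ge\ \alpha^{\,n}\log L(r)\ \ge\ \alpha^{-1}\,r^{k\beta}(\log r)^{-\beta}\log L(r),
\]
for $r$ large, using $\alpha^{\,n}\ge\alpha^{-1}(r^{k}/\log r)^{\beta}$. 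Here $n(r)$ grows like $\log r$, and the care needed is precisely in tracking this count through the floor function and the resulting $(\log r)^{-\beta}$ factor; everything else is routine.

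To finish, I would insert this lower bound into the target inequality. Using $\log L(r)\ge1$ for large $r$ together with $L(r)\le r^{p}$, the target inequality follows once $\varepsilon\,\alpha^{-1}r^{k\beta}(\log r)^{-\beta}\ge k\,r^{p}$, which holds for all sufficiently large $r$ as soon as $k\beta>p$. It therefore suffices to fix $k>\max\{1,p/\beta\}$ at the outset. Crucially $\beta$ depends only on the strong-log-regularity constants $\varepsilon_0,k_0$ and not on the target $\varepsilon$, so this single choice of $k$ works for every $\varepsilon\in(0,1)$, with only $R$ depending on $\varepsilon$. This establishes the target inequality and hence shows that $f$ is $2$-log-regular.
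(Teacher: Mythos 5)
Your proposal is correct and follows essentially the same strategy as the paper: iterate the strong log-regularity inequality (\ref{nrc}) a number of times growing like $\log_{k_0}(r^k/\log r)$ to pass from scale $r$ to scale $\exp(r^k)$, then use the finite-order bound $\log M(r)\le r^p$ to absorb the right-hand side. The only difference is in how the parameters are closed --- the paper keeps the same $k$ from (\ref{nrc}) and instead restricts to $\varepsilon<e^{-p}$ (noting that $2$-log-regularity for small $\varepsilon$ implies it for all $\varepsilon$), whereas you fix $\varepsilon_0$ once and enlarge the target exponent $k$ so that $k\beta>p$; both are valid.
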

\begin{proof}
As $f$ is of finite order, (\ref{ord}) implies that
there exists $p \geq 0,$ such that
\begin{equation}
\label{logord}
\log M(r) \leq r^p,\;\;\text{for}\;\;r\;\;\text{large enough}.
\end{equation}
Also since $f$ is strongly log-regular, for any $\varepsilon \in (0,1),$ there exist $R>0$ and $k>1$ such that
\begin{equation}
\label{st-m}
\log M(r^k) \geq (k\log M(r))^{1/\varepsilon},\;\;\text{for}\;\;r > R.
\end{equation}
In order to show that $f$ is $2$-log-regular we will show that, for any $\varepsilon \in (0,1),$ 
$$\mu_{2,\varepsilon}(\exp(r^k)) \geq \exp(M(r)^k),\;\;\text{for}\;\;r\;\;\text{large enough};$$
that is, using the definition of $\mu_{2,\varepsilon}(r),$
\begin{equation}
\label{5.1.1}
(\log M(\exp(r^k)))^{\varepsilon} \geq M(r)^k,\;\;\text{for}\;\;r\;\;\text{large enough}.
\end{equation}
It is obvious from the definition of $2$-log-regularity that if the condition holds for any $\varepsilon \in (0, 1/e^p)$ it will hold for any $\varepsilon \in (0,1)$ and so we now fix $\varepsilon \in (0, 1/e^p)$ and show that (\ref{5.1.1}) holds for this value of $\varepsilon.$

Consider now 
\begin{equation}
\label{ndef}
n= \frac{k\log r -\log \log r}{\log k}.
\end{equation}
Then  $k^n= r^k/\log r,$ which gives us that
$\exp(r^k)= r^{k^n}.$
Hence 
$$(\log M(\exp(r^k)))^{\varepsilon}= (\log M(r^{k^n}))^{\varepsilon},$$
and by applying (\ref{st-m}) $n$ times, we deduce that
$$ (\log M(\exp(r^k)))^{\varepsilon} \geq k^{1+ 1/\varepsilon+...+1/{\varepsilon}^{n-1}} (\log M(r))^{1/\varepsilon^{n-1}},\;\;\text{for}\;\;r\;\;\text{large enough}.$$
Therefore, it suffices to show that
$$(\log M(r))^{1/\varepsilon^{n-1}} \geq  M(r)^k,$$
or, equivalently, that
$$\left(\frac{1}{\varepsilon}\right)^{n-1} \log \log M(r) \geq k\log M(r),\;\;\text{for}\;\;r\;\;\text{large enough}.$$
By (\ref{logord}) it is sufficient to show that 
$$\left(\frac{1}{\varepsilon}\right)^{n-1}  \geq kr^p,$$
or, equivalently,
\begin{equation}
\label{n-1}
(n-1) \log \frac{1}{\varepsilon} \geq \log k + p\log r,\;\;\text{for}\;\;r\;\;\text{large enough}.
\end{equation}
In order to show that (\ref{n-1}) is true we first note that it follows from (\ref{ndef}) that
$$n-1= \frac{k\log r}{\log k}- \frac{\log \log r + \log k}{\log k},$$  and so
\begin{equation}
\label{explanation}
(n-1) \log \frac{1}{\varepsilon} -p \log r= \left(\log \frac{1}{\varepsilon}\frac{k}{\log k}-p\right)\log r - \log \frac{1}{\varepsilon}\frac{\log \log r + \log k}{\log k}.
\end{equation}
Since $\log \frac{1}{\varepsilon} >p$, there exists $R_0>0$ such that
$$\left(\log \frac{1}{\varepsilon}\frac{k}{\log k}-p\right)\log r \geq \log \frac{1}{\varepsilon}\frac{\log \log r + \log k}{\log k}+  \log k,\;\;\text{for}\;\;r \geq R_0.$$
Together with (\ref{explanation}), this is sufficient to prove (\ref{n-1}).\end{proof}

The converse of Theorem \ref{strong2-log} is not always true though. We now use a function, that was constructed by Rippon and Stallard in \cite[Example 6.1]{Regul}, in order to prove that there exists a $2$-log-regular function of finite order which is not strongly log-regular.

We will need the following result:
\begin{lemma}
\label{sim/log}
Let $\phi$ and $\psi$ be real functions defined on $(0, \infty)$ with $\liminf_{t \to \infty} \phi(t) >1$, $\liminf_{t \to \infty} \psi(t) >1$  and such that 

$$\phi(t) \sim \psi(t),\;\;\text{as}\;\;t \to \infty.$$
Then, for any $\varepsilon \in (0,1),$ there exist $t_0>0$ and $k>1$ such that
\begin{equation}
\label{real2log}
\phi(e^{kt}) \geq \exp(\frac{k}{\varepsilon} \phi(t)),\;\;\text{for}\;\;t\geq t_0
\end{equation}
if and only if there exist $t_1>0$ and $k'>1$ such that
$$\psi(e^{k't}) \geq \exp(\frac{k'}{\varepsilon} \psi(t)),\;\;\text{for}\;\;t\geq t_1.$$

\end{lemma}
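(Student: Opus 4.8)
The plan is to fix $\varepsilon \in (0,1)$ once and for all and to prove the equivalence at this single value of $\varepsilon$, never passing to a different parameter. Since the hypotheses on $\phi$ and $\psi$ are completely symmetric, it suffices to prove one implication, say that the existence of $t_0>0$ and $k>1$ with $\phi(e^{kt}) \geq \exp(\tfrac{k}{\varepsilon}\phi(t))$ for $t \geq t_0$ forces the existence of $t_1>0$ and $k'>1$ with $\psi(e^{k't}) \geq \exp(\tfrac{k'}{\varepsilon}\psi(t))$ for $t \geq t_1$; the reverse implication is then obtained by interchanging the roles of $\phi$ and $\psi$. I would first record two facts. From $\phi(t) \sim \psi(t)$, for every $\delta>0$ there is a threshold beyond which $(1-\delta)\psi(x) \leq \phi(x) \leq (1+\delta)\psi(x)$, and consequently $|\log\psi(x)-\log\phi(x)| \leq \delta$ (here $\psi,\phi>1$ eventually, so the logarithms are positive). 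From the $\phi$-inequality itself, bootstrapping on $\liminf_{t\to\infty}\phi(t) = c > 1$ via $\phi(e^{kt}) \geq \exp(\tfrac{kc}{\varepsilon})$ and iterating shows $\phi(t) \to \infty$, and hence $\psi(t)\to\infty$ as well.

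The transfer itself I would carry out after taking logarithms, which linearises both conditions: the hypothesis becomes $\log\phi(e^{kt}) \geq \tfrac{k}{\varepsilon}\phi(t)$, and the goal becomes $\log\psi(e^{k't}) \geq \tfrac{k'}{\varepsilon}\psi(t)$. To produce the goal at a point $t$, I would write the argument $e^{k't}$ as $e^{ks}$ with $s = \tfrac{k'}{k}t$, replace $\psi$ by $\phi$ on the outer logarithm at the cost of an additive $\delta$, and apply the hypothesis at $s$; replacing $\phi(s)$ by $\psi(s)$ on the right at the cost of a factor $(1-\delta)$ then reduces everything to the single inequality
\[
k(1-\delta)\,\psi\!\left(\tfrac{k'}{k}\,t\right) \;\geq\; k'\,\psi(t) + \varepsilon\delta, \qquad \text{for } t \text{ large.}
\]
Everything up to this point is routine bookkeeping with the asymptotic equivalence.

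The hard part will be exactly this last inequality, and I want to flag why it is delicate rather than pretend it is automatic. The conditions couple the rescaling $t \mapsto \tfrac{k'}{k}t$ of the inner variable to the coefficients $k$ and $k'$, and the perturbation coming from $\phi \sim \psi$ has entered multiplied by $\psi(t)$, which tends to infinity; thus the error is not negligible on its own, and the naive choice $k'=k$ fails, since it would require controlling the sign and size of $\psi(t)-\phi(t)$, which the relation $\phi \sim \psi$ does not by itself provide. The way I would close the gap is to exploit the growth of $\psi$ forced by the hypothesis: choosing $k'>k$ makes $\tfrac{k'}{k}t>t$, and the rapid (and, for the functions to which the lemma is applied, monotone) growth of $\psi$ yields $\psi(\tfrac{k'}{k}t) \geq \tfrac{k'}{k(1-\delta)}\psi(t) + 1$ for all large $t$ once $\delta$ has been fixed small, which is precisely the displayed inequality. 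Verifying this growth estimate uniformly in $t$ — reconciling the linear rescaling of the argument with the multiplicative error inherited from $\phi \sim \psi$ — is the genuine content of the lemma; the symmetric argument then gives the converse and completes the proof.
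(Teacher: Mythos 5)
Your reduction is carried out correctly, and your preliminary observations (that $\phi\to\infty$, hence $\psi\to\infty$, and that $\phi\sim\psi$ gives $|\log\phi-\log\psi|\le\delta$ eventually) are fine. But the step you flag as ``the genuine content'' is exactly where the proof fails, and it cannot be repaired along the route you chose. The inequality you need,
\[
k(1-\delta)\,\psi\!\left(\tfrac{k'}{k}t\right)\;\ge\;k'\psi(t)+\varepsilon\delta ,
\]
is a regular-growth condition on $\psi$ under \emph{linear} rescaling of its argument, and nothing in the hypotheses of the lemma provides it: $\psi$ is an arbitrary real function with $\liminf\psi>1$ and $\psi\sim\phi$; it need not be monotone, and even if it were, monotonicity only gives $\psi(\tfrac{k'}{k}t)\ge\psi(t)$, never the multiplicative gain $\tfrac{k'}{k(1-\delta)}\psi(t)+1$ that you require. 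Your appeal to ``the rapid (and, for the functions to which the lemma is applied, monotone) growth of $\psi$'' is circular: the lemma must be proved from its stated hypotheses, precisely so that it can then be applied. Worse, the estimate you assume is \emph{false} for the very functions this lemma exists for: in Example \ref{2notstrong} the function $\psi(t)=\log M(e^t)$ is asymptotic to a function which is linear on the intervals $[t_{n+1}^{3/4},t_{n+1}]$, and for a linear stretch $L(t)=st+b$ whose intercept is negligible relative to its values at the right end one has $L(\lambda t)/L(t)\to\lambda$ as $t_{n+1}\to\infty$ (take $t=t_{n+1}/\lambda$); so $\psi(\lambda t)\ge\tfrac{\lambda}{1-\delta}\psi(t)$ fails infinitely often for every fixed $\lambda>1$, $\delta>0$. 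Such slowly growing stretches are the whole point of the Rippon--Stallard construction, so no argument that needs to compare $\psi$ at $t$ and at $\lambda t$ can succeed here.

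The paper's proof avoids this entirely by never rescaling the inner argument. Writing $\phi=\psi(1+\epsilon)$ with $\epsilon(t)\to0$ and substituting directly into the hypothesis (after the change of variable $t\mapsto\log t$) gives
\[
\log\psi(t^{k})+\log(1+\epsilon(t^{k}))\;\ge\;\tfrac{k}{\varepsilon}\,\psi(\log t)\,(1+\epsilon(\log t)),
\]
and then both error terms are absorbed into a reduction of the \emph{constant}: the multiplicative error $(1+\epsilon(\log t))$ is paid for by replacing $k$ with a slightly smaller $k'>1$, and the additive error $\log(1+\epsilon(t^{k}))=o(1)$ is paid for using $\liminf\psi>1$, which keeps the main term bounded away from $0$. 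The structural point you missed is that the perturbation coming from $\phi\sim\psi$ enters only as a factor $(1+\epsilon)$ multiplying $\psi$ inside the exponential, which is exactly where a decrease of the coefficient $k$ can compensate it; all the slack available from the hypotheses lives in the constants, not in comparisons of $\psi$ at two different points, and a correct proof has to keep the argument of $\psi$ on the left-hand side equal to the one the hypothesis supplies.
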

\begin{proof}
Let 
\begin{equation}
\label{swr9}
\phi(t) = \psi(t) (1+\epsilon(t)),
\end{equation}
 where $\epsilon(t) \to 0$ as $t \to \infty$ and suppose that $\phi$ satisfies (\ref{real2log}). Then
\begin{equation}
\label{swr10}
\log \phi(t^k) \geq \frac{k}{\varepsilon}\phi(\log t),\;\;\text{for}\;\; \log t\geq t_0.
\end{equation}
It follows from (\ref{swr9}) and (\ref{swr10}) that
$$\log \psi(t^k)+ \log (1+ \epsilon(t^k)) \geq \frac{k}{\varepsilon} \psi(\log t)(1+ \epsilon(\log t)),\;\;\text{for}\;\; \log t\geq t_0.$$
Hence, since $1+\epsilon(\log t) \to 1$ as $t\to \infty,$ $\log (1+ \epsilon(t^k)) \to 0$, as $t \to \infty,$ and $\liminf_{t \to \infty} \phi(t) >1$, $\liminf_{t \to \infty} \psi(t) >1$, there exist $k'>1$ and  $t_1>0,$ such that
$$\log (\psi(t^{k'})) \geq \frac{k'}{\varepsilon} \psi(\log t),\;\;\text{for}\;\;t\geq t_1,$$
as claimed.\end{proof}

\begin{ex}
\label{2notstrong}
There exists a transcendental entire function of finite order which is $2$-log-regular but not strongly log-regular.
\end{ex} 
\begin{proof}
The main idea of the proof is to use a function $f$ constructed by Rippon and Stallard \cite[Example 6.1]{Regul}, which has order zero and is not log-regular (and hence is not strongly log-regular) and show that $f$ is $2$-log-regular.\\

In order to show that $f$ is $2$-log-regular we need to show that, for any $\varepsilon>0,$ there exist $r_0>0$ and $k>1$ such that
$$\mu_{2,\varepsilon}(\exp(r^k)) \geq \exp (M(r)^k),\;\;\text{for}\;\;r \geq r_0,$$
or equivalently,
$$\log (M(\exp(r^k)) \geq M(r)^{k/\varepsilon},\;\;\text{for}\;\;r \geq r_0.$$
Hence, the condition we have to prove for $\psi (t) = \log M(e^t)$ is that, for any $\varepsilon \in (0,1),$ there exist
$t_0>0$ and $k>1$ such that
\begin{equation}
\label{ex3}
\psi(\exp(kt)) \geq \exp (\frac{k}{\varepsilon} \psi(t)), \;\;\text{for}\;\;t \geq t_0.
\end{equation} 

In Rippon and Stallard's example, $\phi(t)= (\log M(e^t))/(1+ \epsilon(t))$ was defined as follows:\\
$$\phi(t)= \begin{cases} \mu_n(t),\;\;t \in [t_{n+1}^{3/4}, t_{n+1}],\\
\mu(t),\;\;\text{otherwise}, \end{cases}$$
where $\mu(t)= \exp(t^{1/2})$ and $\mu_n(t)$ denotes the linear function such that $\mu_n(t)=\mu(t)$ for $t= t_{n+1}^{3/4}, t= t_{n+1}.$\\

We will first show that for any $\varepsilon \in (0,1),$ there exist
$t_1>0$ and $k'>1$ such that
\begin{equation}
\label{example}
\phi(\exp(k't)) \geq \exp (\frac{k'}{\varepsilon} \phi(t)), \;\;\text{for}\;\;t \geq t_1.
\end{equation}\\

Let $\varepsilon \in (0,1)$.  When $\phi(t)= \mu(t) = \exp(t^{1/2}),$ we have
$$\phi(\exp(kt)) \geq \mu(\exp(kt))= \exp(\exp(\frac{1}{2}kt)) \geq \exp(\frac{k}{\varepsilon}\exp(t^{1/2}))=\exp(\frac{k}{\varepsilon}\phi(t)),$$ for $t$ large enough,
and so (\ref{example}) holds, for these values of $t$.

Now suppose that $t \in [t_{n+1}^{3/4}, t_{n+1}]$, for some $n \in \mathbb{N}.$ Then
\begin{eqnarray}
\phi(\exp(kt))\geq \phi(\exp(kt_{n+1}^{3/4})) &\geq& \mu(\exp(kt_{n+1}^{3/4})) \nonumber \\
&=& \exp(\exp(\tfrac{1}{2}kt_{n+1}^{3/4})) \nonumber\\
&\geq & \exp(\tfrac{k}{\varepsilon} \exp(t_{n+1}^{1/2})),\;\;\text{for}\;\;t_{n+1}\;\;\text{large enough}, \nonumber \\
&=& \exp(\tfrac{k}{\varepsilon}\phi(t_{n+1}))\nonumber\\
&\geq & \exp (\tfrac{k}{\varepsilon} \phi(t)),\nonumber
\end{eqnarray}
and hence (\ref{example}) is satisfied.

Now, Lemma \ref{sim/log} implies that $\psi$ satisfies (\ref{ex3}) which means that $f$ is $2$-log-regular.\end{proof}

 \end{document}